\newtheorem{observation}{Observation}[section]
\newtheorem{theorem}{Theorem}[section]
\newtheorem{lemma}{Lemma}[section]
\newtheorem{remark}{Remark}[section]
\newtheorem{corollary}{Corollary}[section]
\journal{Discrete Applied Mathematics}
\begin{document}

\begin{frontmatter}
\tnotetext[mytitlenote]{This study was financed by CAPES - Finance Code 001 and CNPq.}
\cortext[cor1]{Corresponding author: Diego Amaro Ferraz}

\title{Filling some gaps on the edge coloring problem of split graphs}

\author{Fernanda Couto$^a$} 

\affiliation{organization={Federal Rural University of Rio de Janeiro},
            city={Nova Iguaçu},
            country={Brazil}}

\author{Diego Amaro Ferraz$^b$, Sulamita Klein$^b$} 

\affiliation{organization={Federal University of Rio de Janeiro},
            city={Rio de Janeiro},
            country={Brazil}}

\begin{abstract}
A split graph is a graph whose vertex set can be partitioned into a clique and an independent set. A connected graph $G$ is said to be $t$-admissible if admits a spanning tree in which the distance between any two adjacent vertices of $G$ is at most $t$. Given a graph $G$, determining  the smallest $t$ for which $G$ is $t$-admissible, i.e., the stretch index of $G$ denoted by $\sigma(G)$, is the goal of the {\sc $t$-admissibility problem}. Split graphs are $3$-admissible and can be partitioned into three subclasses: split graphs with $\sigma=1, 2 $ or $3$. In this work we consider such a partition while dealing with the problem of coloring the edges of a split graph. Vizing proved that any graph can have its edges colored with $\Delta$ or $\Delta+1$ colors, and thus can be classified as \emph{Class~1} or \emph{Class~2}, respectively. 
\textsc{The edge coloring problem} is open for split graphs in general. In previous results, we classified split graphs with $\sigma =2$ and in this paper we classify and provide an algorithm to color the edges of a subclass of split graphs with $\sigma=3$.

\end{abstract}


\begin{highlights}
\item We consider a special partition of split graphs 
\item We classify an infinity family of $(\sigma=3)$-split graphs as Class~1 graphs and provide a polynomial-time algorithm.  
\end{highlights}

\begin{keyword}
Split edge coloring \sep $(\sigma=3)$-split graphs \sep coloring problems \sep $t$-admissibility problem.
\end{keyword}

\end{frontmatter}


\section{Introduction}\label{sec:introduction}

A proper $k$-coloring of a graph is an assignment of $k$ colors to its elements (vertices or edges) in which adjacent or incident elements receive distinct colors. In this case, $G$ is said to be $k$-colorable.
In coloring problems, the goal is usually to minimize the number of assigned colors, that is, to determine the smallest $k$ for which the graph is $k$-colorable. In this text we deal with the \textsc{edge coloring problem}. In this problem, the smallest $k$ for which the graph is $k$-colorable is called the \emph{chromatic index} and is denoted by $\chi'(G)$. It is easy to see that the maximum degree $\Delta$ of a graph $G$ is a lower bound for its chromatic index. Furthermore, Vizing~\cite{Vizing} proved that $\Delta+1$ is an upper bound for the chromatic index of any graph $G$, and thus, graphs can be classified either as \emph{Class~1}, if they are $\Delta$-colorable, or as \emph{Class~2}, otherwise. Therefore, this problem is also known as the {\sc classification problem}.\\ 
A graph $G$ is called a split graph if $V(G)$ can be partitioned into a clique $Q$ and an independent set $S$. Throughout this text we denote a split graph as $G=((Q,S),E)$ and we consider that $Q$ is a maximal clique. Although, the \textsc{edge coloring problem} is known to be \NP-complete in general (\cite{Holyer1981TheNO}, \cite{SANCHEZARROYO1989315}), this problem remains open when restricted to split graphs. There are several results in the literature related to the edge coloring of split graphs (\cite{Wilson}, \cite{Plantholt}, \cite{Chen}, \cite{Sheila}). Furthermore, until a previous work of ours\cite{couto2023new1}, split graphs have been studied in the context of edge coloring by considering some subclasses such as split-indifference graphs~\cite{CARMENORTIZ1998209}, split-comparability graphs~\cite{ComparabilityOrtiz} and split interval graphs~\cite{Gonzaga}. Since the goal is to fully classify split graphs with respect to the edge coloring problem, an interesting question that arises is: which subclasses are left to be studied in order to finish the fully classification of split graphs?\\
The {\sc t-admissibility problem}~\cite{Cail}, which is another quite challenging problem in general, is known to be polynomially time solvable for split graphs. This problem aims to determine the smallest $t$ such that a given graph $G$ admits a tree $t$-spanner, that is, a spanning tree $T$ in which the greatest distance between any pair of adjacent vertices of $G$ is at most $t$. If $G$ admits a tree $t$-spanner, then $G$ is said to be \emph{$t$-admissible} and $t$ is the \emph{stretch factor} associated to the tree. The smallest stretch factor among all spanning trees of $G$ is the \emph{stretch index} of $G$, denoted by $\sigma(G)$, or simply $\sigma$. We call a graph $G$ with $\sigma(G)=t$ a $(\sigma=t)$-graph. Split graphs are known to be $3$-admissible~\cite{Panda} and therefore the \textsc{$t$-admissibility problem} partitions the class into $3$ subclasses: ($\sigma=1$)-split graphs (bi-stars, i.e., trees with $n$ vertices and at least $n-2$ leaves), ($\sigma=2$)-split graphs or ($\sigma=3$)-split graphs (see Figure~\ref{particao_admissiblidade}). From the literature~\cite{zbMATH02614481}, we know how to color the edges of bi-stars and, in a previous work~\cite{couto2023new1}, we were able to color the edges of ($\sigma=2$)-split graphs. Thus, in order to fully classify the \textsc{edge coloring problem} for split graphs, we are left with the study of split graphs with $\sigma(G)=3$ (see Figure~\ref{diagramaCA}). In this work, we fill some gaps in the study of edge-coloring of split graphs by coloring the edges of a subclass of ($\sigma=3$)-split graphs.\\ 

\begin{figure}[H]
        \includegraphics[scale=0.20]{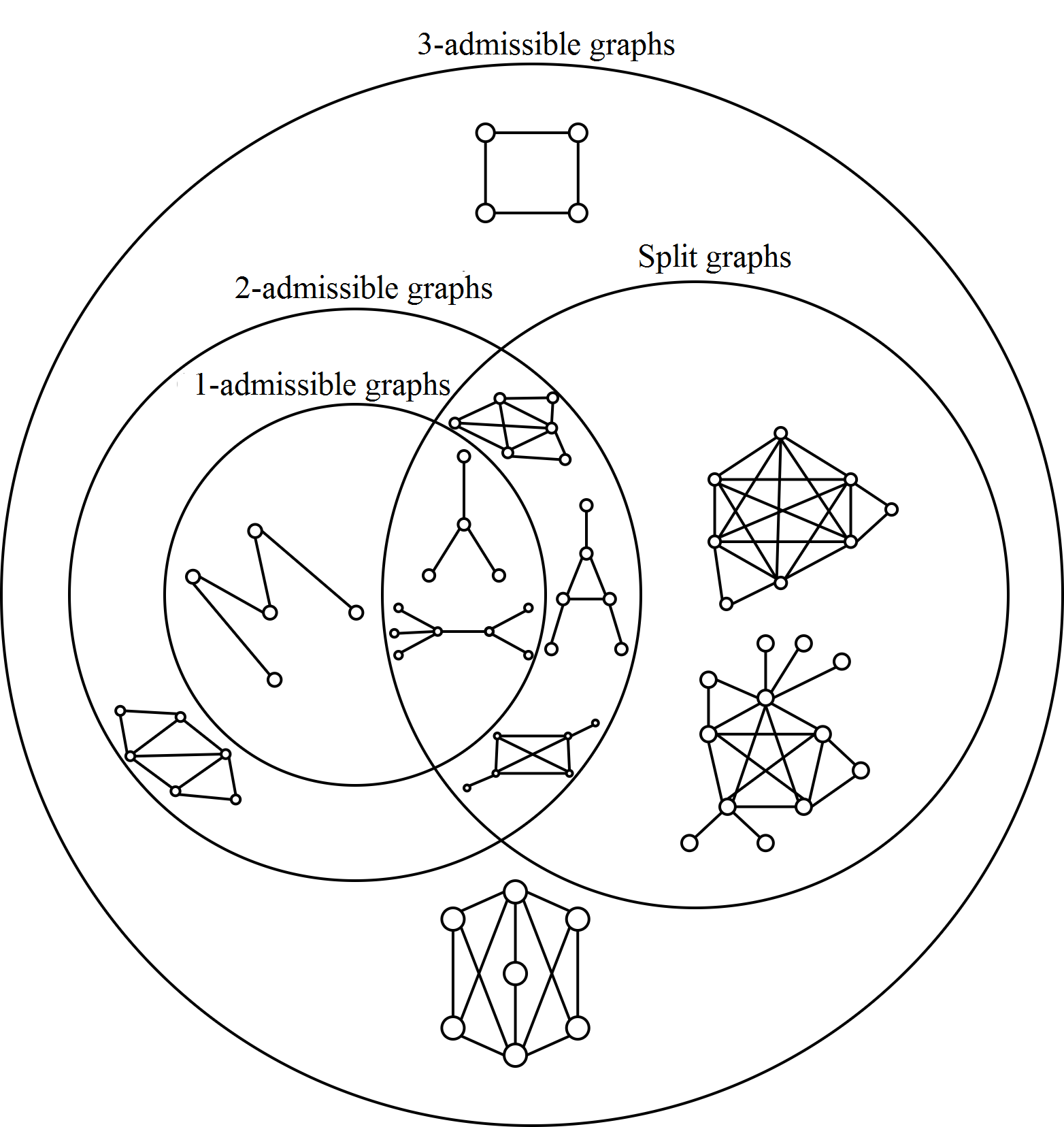}
        \centering
        \caption{Relation between split graphs and $3$-admissible graphs.   \label{particao_admissiblidade}}
    \end{figure}

\begin{figure}[H]
        \includegraphics[scale=0.50]{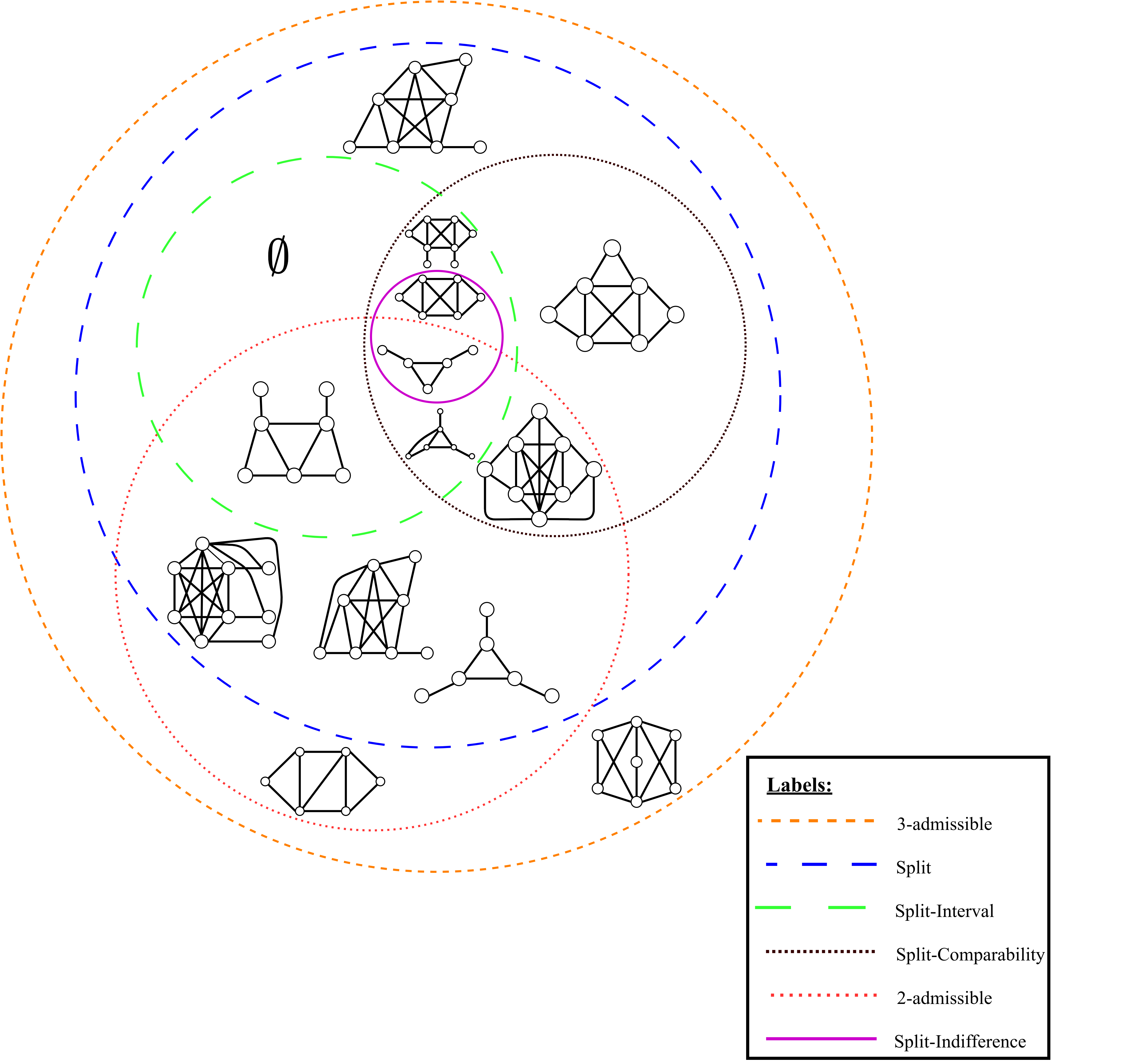}
        \caption{Some subclasses of split graphs and their relation with the $t$-admissibility problem. \label{diagramaCA}}
        \centering
    \end{figure}

This text is organized as follows: in Section~\ref{sec:introduction}, in addition to this brief introduction, we present some preliminary definitions which are used throughout this work; in Section~\ref{sec:edge-coloring}, we present an overview of the \textsc{edge coloring problem} and list some important results; in Section~\ref{sec:results}, we present our contributions. Finally, in Section~\ref{sec:conclusion} we summarize our results and present the final considerations and future guidelines for research.

\subsection{Preliminary Definitions}
In the following, we list some important definitions that are used in this work~\cite{Bondy1976}.

The \emph{degree} of a vertex $v$ is the number of edges incident to $v$ and is denoted by $d(v)$. If the degree of a vertex $v$ is equal to $1$, $v$ is called a \emph{pendant vertex}. The highest degree of a graph $G$ is denoted by $\Delta(G)$, whereas the smallest degree is denoted by $\delta(G)$. The vertex $v$ with degree equal do $\Delta$ (resp. $\delta$) is called a $\Delta$-vertex (resp. $\delta$-vertex). A graph $G$ is said to be \emph{overfull} if $\displaystyle{|E(G)|>\Delta(G)\cdot\left\lfloor{\frac{n}{2}}\right\rfloor}$, where $n=|V(G)|$. A graph $G$ is \emph{subgraph-overfull} if $\exists{H}\subseteq{G}$ such that $\Delta(H)=\Delta(G)$ and $H$ is overfull. A graph $G$ is \emph{neighborhood-overfull} if $H$ is induced by a vertex $v$ such that $d(v)=\Delta(H)$ and by all its neighbors. Let $C$ be the set of colors used on the edges of a vertex $v$. If a color $c \notin C$, then $c$ is called a \emph{missing color} of vertex $v$ and $c \in L(v)$, where $L(v)$ denotes the list of missing colors of the vertex $v$.

\subsection{The $t$-admissibility problem for split graphs}\label{sect:tadm_split}
The \textsc{$t$-admissibility problem} was introduced by Cai and Corneil~\cite{Cail} in 1995.  
The goal of this problem is to obtain a special spanning tree $T$ of a graph $G$ in which the greatest distance between any pair of adjacent vertices in $G$ is, at most, $t$. This spanning tree is called a \emph{tree $t$-spanner} and, if $G$ admits such a tree, the graph is said to be \emph{$t$-admissible} and $t$ is the \emph{stretch factor} associated with a tree $T$. The smallest value of $T$ for which $G$ is $t$-admissible is the \emph{stretch index} of the graph and is denoted by $\sigma(G)$, or simply $\sigma$. Deciding whether a graph $G$ is a $(\sigma=1)$-graph or a $(\sigma=2)$-graph are polynomial-time solvable problems. Indeed, it is easy to see that a graph is a $(\sigma=1)$-graph if, and only if, it is a tree. Moreover, Cai and Corneil~\cite{Cail} provided a linear-time algorithm to decide if a graph has $\sigma(G)=2$. They also settled that, for $t$ at least $4$, the problem is \NP-complete. Curiously, the problem is still open when the goal is to decide if a graph has $\sigma(G)=3$. However, there are some classes known to be $3$-admissible such as cographs~\cite{Couto} and split graphs~\cite{Panda}. In this work, we focus on split graphs. 

In particular, for split graphs, we have a characterization given by Theorem~\ref{thm:caracterizacao_split2adm}~\cite{Couto}, which is important for the results of this paper.\\ Initially, we have the following observation.

\begin{observation}
    \label{obs:pendant_vertex}
    Let $G=(V,E)$ be a graph and let $P=\{v\in{V}|d(v)=~1\}$. Then, $\sigma(G)=\sigma(G\setminus{P})$.
\end{observation}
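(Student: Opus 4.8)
The plan is to establish the equality by proving the two inequalities $\sigma(G\setminus P)\le\sigma(G)$ and $\sigma(G)\le\sigma(G\setminus P)$ separately, using the single structural fact that a vertex of degree $1$ in $G$ is forced to be a leaf in every spanning tree of $G$ (its unique incident edge is a bridge and must belong to any spanning tree).

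For $\sigma(G\setminus P)\le\sigma(G)$ I would start from a tree $\sigma(G)$-spanner $T$ of $G$. Since every $v\in P$ has $d_G(v)=1$, it is a leaf of $T$, so $T'=T\setminus P$ is still a tree, it spans $V(G)\setminus P$, and every edge of $T'$ is an edge of $G\setminus P$; hence $T'$ is a spanning tree of $G\setminus P$. Now for any edge $xy$ of $G\setminus P$, the $x$--$y$ path in $T$ cannot have an internal vertex in $P$ (a leaf is never internal to a path), so this path already lies in $T'$, giving $d_{T'}(x,y)=d_T(x,y)\le\sigma(G)$. Thus $T'$ witnesses $\sigma(G\setminus P)\le\sigma(G)$.

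For the reverse inequality I would go the other way: take an optimal tree $\sigma(G\setminus P)$-spanner $T'$ of $G\setminus P$ and re-attach each $v\in P$ through its unique edge. One first notes that, unless $G\cong K_2$, the neighbor of a pendant vertex is itself non-pendant (otherwise those two vertices form a $K_2$ component, contradicting connectedness), so each such neighbor still belongs to $V(G)\setminus P$ and the re-attachment produces a genuine spanning tree $T$ of $G$ in which the re-added vertices are leaves. Consequently paths between vertices of $V(G)\setminus P$ are unchanged, so every edge of $G$ is either an edge of $G\setminus P$ with $d_T=d_{T'}\le\sigma(G\setminus P)$, or a pendant edge with $d_T=1\le\sigma(G\setminus P)$; hence $\sigma(G)\le\sigma(G\setminus P)$. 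Combining the two inequalities yields $\sigma(G)=\sigma(G\setminus P)$, the degenerate cases $G\cong K_1$ and $G\cong K_2$ being checked directly.

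I do not expect a genuine obstacle: the statement is essentially bookkeeping about leaves in spanning trees. The only point requiring mild care is the direction $\sigma(G)\le\sigma(G\setminus P)$, where one must make sure that re-attaching pendant vertices neither destroys the spanning-tree property (handled by the observation that each pendant vertex's neighbor survives in $G\setminus P$) nor lengthens any existing tree-path (attaching a leaf never does).
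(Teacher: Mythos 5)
Your argument is correct. The paper states this as an Observation and offers no proof of its own, so there is nothing to compare against; your two-inequality argument --- deleting the forced leaves of a tree spanner of $G$ to get one of $G\setminus P$, and re-attaching pendant vertices as leaves in the other direction, with the $K_1$/$K_2$ degeneracies and the fact that a pendant vertex's neighbour is itself non-pendant handled explicitly --- is exactly the bookkeeping the authors are implicitly relying on, and it is sound.
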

 
From now on, we consider split graphs without pendant vertices.

Theorem~\ref{thm:caracterizacao_split2adm} is the most important theorem concerning the $t$-admissibility part to this work contributions.

\begin{theorem}[\cite{Couto}]
    Let $G=((Q,S),E)$ be a split graph such that $\forall{s}\in S, d(s)~>~1$. Then $\sigma(G)=2$ if, and only, $G$ has a universal vertex.
    \label{thm:caracterizacao_split2adm}
\end{theorem}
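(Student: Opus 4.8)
The plan is to prove both directions of the equivalence directly from the definition of a tree $t$-spanner, using the split structure of $G$ together with Observation~\ref{obs:pendant_vertex} (which lets us assume every vertex of $S$ has degree at least $2$). For the sufficiency direction, suppose $G$ has a universal vertex $u$; necessarily $u \in Q$ since $Q$ is a maximal clique. I would take the star $T$ centered at $u$, i.e. the spanning tree with edge set $\{uv : v \in V(G)\setminus\{u\}\}$. For any edge $xy \in E(G)$, the $T$-path from $x$ to $y$ is $x$–$u$–$y$, which has length $2$, so $\sigma(G) \le 2$; and since $G$ is not a tree (it has a clique $Q$ of size at least $2$ plus the universal vertex, so it contains a triangle, or is itself trivial — one should note the degenerate cases), $\sigma(G) \ge 2$, giving $\sigma(G) = 2$.

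For the necessity direction, I would argue the contrapositive: assume $G$ has no universal vertex and show $\sigma(G) \ge 3$, i.e. no spanning tree realizes stretch $2$. Let $T$ be any spanning tree of $G$; I want to exhibit an edge $xy \in E(G)$ whose $T$-distance is at least $3$. The natural case analysis is on how $T$ restricts to the clique $Q$. Since $T$ is a tree, $T[Q]$ cannot contain all of $Q$'s internal structure; more usefully, consider a vertex $v$ of $Q$ that is not universal in $G$ — such a vertex exists by hypothesis — and let $w$ be a vertex non-adjacent to $v$ (so $w \in S$). The key structural fact to extract is that in a stretch-$2$ tree of a split graph, one vertex must act "almost universally"; I would make this precise by showing that if $T$ has stretch $2$ then the center-like behaviour forced on the edges within $Q$ (every pair of clique vertices at $T$-distance $\le 2$) pins down a single vertex $c$ through which most clique edges route, and then show $c$ together with the edges of $S$ forces $c$ to be universal, contradicting the hypothesis.

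The main obstacle I anticipate is the necessity direction, specifically the combinatorial argument that a stretch-$2$ spanning tree of a split graph forces the existence of a universal vertex. The delicate point is handling the independent set $S$: each $s \in S$ has $d(s) \ge 2$, and in $T$ the neighbors of $s$ in $G$ must all lie within $T$-distance $2$ of $s$, which constrains where $s$ attaches in $T$ and how the clique edges incident to $N(s)$ are routed. I expect the cleanest route is: first establish that a stretch-$2$ tree $T$ of $G$ must contain a vertex $c$ adjacent in $T$ to every other vertex except possibly those at $T$-distance exactly $2$ forming a specific configuration, then use the fact that $Q$ is a clique to show $c \in Q$ and that $c$ is adjacent in $G$ to everything (clique membership handles $Q$; the degree condition on $S$ plus the stretch bound handles $S$). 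One should be careful about small cases (e.g. $|Q| = 1$ or $2$, $S = \emptyset$), and about whether $c$ is forced to lie in $Q$ versus $S$ — a vertex of $S$ cannot be a $T$-near-center for all clique edges because its $G$-degree may be too small, which is exactly where the $d(s) > 1$ hypothesis and maximality of $Q$ get used.
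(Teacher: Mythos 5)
The paper does not prove this theorem; it is imported from~\cite{Couto}, so your attempt can only be judged on its own merits. Your sufficiency direction is essentially complete (the star at the universal vertex gives stretch $2$; the degenerate tree cases $K_1,K_2$ are the only ones excluded by $\sigma=2$ versus $\sigma=1$, and the hypothesis $d(s)>1$ together with maximality of $Q$ rules out nontrivial stars). The necessity direction, however, is a genuine gap: you state the key structural fact --- that a stretch-$2$ spanning tree forces an ``almost universal'' center --- as something you \emph{would} make precise, and you explicitly flag it as the main obstacle, but you never execute it. As written there is no argument, only a plan.

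The plan is salvageable, and here is the missing content. Let $T$ be a tree $2$-spanner. Since $Q$ is a clique, all pairwise $T$-distances within $Q$ are at most $2$; taking $q_1,q_2\in Q$ at maximum $T$-distance (which must be $2$ when $|Q|\ge 3$) and letting $c$ be the midpoint of the $T$-path $q_1cq_2$, a short tree argument (comparing where the $T$-path from any other $q\in Q$ meets the path $q_1cq_2$) shows every vertex of $Q$ is either $c$ or $T$-adjacent to $c$. If $c\notin Q$ then $c\in S$ is adjacent to all of $Q$, contradicting maximality of $Q$; so $c\in Q$ and $c$ dominates $Q$. Then for $s\in S$ with two neighbors $q_1,q_2\in Q\setminus\{c\}$ (which exist if $s\not\sim c$, since $d(s)\ge 2$ and $S$ is independent), $s$ lies in some component of $T-c$, and whichever of $q_1,q_2$ lies in a different component satisfies $d_T(s,q_i)=d_T(s,c)+1\le 2$, forcing $sc\in E(T)\subseteq E(G)$ --- a contradiction. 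Hence $c$ is universal. Until you supply this (or an equivalent) argument, the necessity direction --- which is the entire substance of the theorem --- remains unproven.
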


\section{The edge coloring problem}\label{sec:edge-coloring} 
A proper edge coloring of a graph consists in an assignment of colors to the edges of a graph $G=(V,E)$ such that edges incident to the same vertex are colored with different colors. From now on, any coloring we refer in the text is proper. The smallest number of colors needed to color the edges of a graph $G$ is called the \emph{chromatic index} of $G$ and is denoted by $\chi'(G)$. Note that it is easy to see that the maximum degree $\Delta$ of a graph $G$ is a lower bound for $\chi'(G)$ as we need $\Delta$ different colors to color the edges incident to a $\Delta$-vertex. In 1964, Vizing~\cite{Vizing} proved that $\Delta+1$ is an upper bound for the chromatic index of any graph $G$, and thus, graphs can be classified either as \emph{Class~1}, if they are $\Delta$-colorable, or  as \emph{Class~2}, otherwise. This problem is also known as the {\sc classification problem}. Surprisingly, even with only two possible values for $\chi'(G)$, the problem is proved to be \NP-complete~\cite{Holyer1981TheNO}, in general. However, it is open for several graph classes, for instance split graphs, the class we deal in this paper.


\subsection{Known results}

Although the \textsc{edge coloring problem} is open for split graphs, some partial results are known. In this subsection, we summarize some of these results.

\begin{theorem}[\cite{Behzad}]
    Let $G$ be a complete graph $K_n$. If $n$ is an even number, then $G$ is Class~1. Otherwise, $G$ is Class~2.
    \label{thm:complete-graphs}
\end{theorem}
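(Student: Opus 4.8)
The plan is to treat the two parities of $n$ separately. In both cases $\Delta(K_n)=n-1$, so Vizing's theorem already gives $\chi'(K_n)\in\{n-1,n\}$, and it remains only to decide which value occurs.

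For odd $n$ I would use a counting argument. Any color class of a proper edge coloring is a matching, hence contains at most $\lfloor n/2\rfloor=(n-1)/2$ edges. Thus a proper coloring using only $n-1$ colors would cover at most $(n-1)\cdot\frac{n-1}{2}=\frac{(n-1)^2}{2}$ edges, which is strictly smaller than $|E(K_n)|=\binom{n}{2}=\frac{n(n-1)}{2}$. Therefore $\chi'(K_n)\ge n$, and combined with Vizing's upper bound, $\chi'(K_n)=n=\Delta+1$, so $K_n$ is Class~2. (Equivalently, this shows $K_n$ is overfull when $n$ is odd.)

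For even $n$, note that $n-1$ is odd, and I would exhibit an explicit $1$-factorization of $K_n$ into $n-1$ perfect matchings, which forces $\chi'(K_n)=n-1=\Delta$. Identify $V(K_n)$ with $\mathbb{Z}_{n-1}\cup\{\infty\}$; color each edge $\{a,b\}$ with $a,b\in\mathbb{Z}_{n-1}$ by $a+b\pmod{n-1}$, and each edge $\{\infty,a\}$ by $2a\pmod{n-1}$. To verify properness one checks two things: at a vertex $c\in\mathbb{Z}_{n-1}$, as $b$ ranges over $\mathbb{Z}_{n-1}\setminus\{c\}$ the colors $c+b$ range over $\mathbb{Z}_{n-1}\setminus\{2c\}$, while the edge $\{\infty,c\}$ supplies exactly the missing color $2c$; and at the vertex $\infty$, the colors $2a$ for $a\in\mathbb{Z}_{n-1}$ are pairwise distinct since $\gcd(2,n-1)=1$. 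Hence the coloring is proper and uses $n-1$ colors, so $K_n$ is Class~1.

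The delicate point — essentially the only one — is this properness verification in the even case: one must confirm that the edge to $\infty$ fills in precisely the single color absent among the $\mathbb{Z}_{n-1}$-edges at each vertex, and that $2$ being a unit modulo the odd number $n-1$ makes the $n-1$ colors incident to $\infty$ distinct. The odd case and the invocation of Vizing's bound are routine.
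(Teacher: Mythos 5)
Your proof is correct. Note that the paper does not prove this statement at all --- it is quoted as a classical result of Behzad (Theorem~\ref{thm:complete-graphs}) and used as a black box --- so there is no in-paper argument to compare against. What you give is the standard textbook proof: the odd case is exactly the observation that $K_n$ is overfull (a notion the paper itself defines and uses elsewhere), and the even case is the round-robin $1$-factorization on $\mathbb{Z}_{n-1}\cup\{\infty\}$. Your properness check is complete: at $c\in\mathbb{Z}_{n-1}$ the colors $c+b$ miss exactly $2c$, which the edge to $\infty$ supplies, and at $\infty$ the map $a\mapsto 2a$ is a bijection because $n-1$ is odd. No gaps.
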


Using an algorithm derived from Theorem~\ref{thm:complete-graphs}, we are able to color the edges of any complete graph. Furthermore, it is easy to see that any subgraph $H$ of an even complete graph $G$, such that $\Delta(H)=\Delta(G)$ is also Class~1 as stated in Theorem~\ref{thm:even-complete-graphs}.

\begin{theorem}[\cite{Behzad}]
    Let $G$ be a graph with an universal vertex and an even number of vertices. Then $G$ is Class~1.
    \label{thm:even-complete-graphs}
\end{theorem}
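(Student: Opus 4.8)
The plan is to reduce the claim to the already‑known classification of even complete graphs, namely Theorem~\ref{thm:complete-graphs}. Let $G$ be a graph on $n$ vertices with a universal vertex $u$ and with $n$ even. Since $u$ is adjacent to every other vertex, we have $\Delta(G)=d(u)=n-1$, so a proper edge coloring of $G$ must use at least $n-1$ colors; it therefore suffices to exhibit a proper edge coloring of $G$ with exactly $n-1$ colors. The idea is to view $G$ as a subgraph of $K_n$ sharing the same vertex set: $E(G)\subseteq E(K_n)$, and crucially $\Delta(G)=n-1=\Delta(K_n)$.

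First I would invoke Theorem~\ref{thm:complete-graphs}: because $n$ is even, $K_n$ is Class~1, so there is a proper edge coloring $\varphi$ of $K_n$ using exactly $n-1$ colors. Next I would restrict $\varphi$ to the edge set of $G$, i.e. define $\varphi' = \varphi|_{E(G)}$. This restriction is still a proper edge coloring, since deleting edges can never create a conflict between two edges that share an endpoint; any two adjacent edges of $G$ are also adjacent edges of $K_n$ and hence already received distinct colors under $\varphi$. Thus $\varphi'$ is a proper edge coloring of $G$ using at most $n-1$ colors. Combined with the lower bound $\chi'(G)\ge\Delta(G)=n-1$ coming from the universal vertex, this gives $\chi'(G)=n-1=\Delta(G)$, so $G$ is Class~1.

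There is essentially no hard step here: the argument is the standard observation, flagged in the sentence immediately preceding Theorem~\ref{thm:even-complete-graphs}, that a subgraph $H$ of a Class~1 graph $G$ with $\Delta(H)=\Delta(G)$ inherits Class~1 status. The only point that deserves a word of care is verifying that $\Delta(G)$ really equals $n-1$ — this is exactly where the hypothesis that $G$ has a universal vertex is used (without it, a subgraph of $K_n$ could have smaller maximum degree and the inherited colouring would merely show $\chi'(H)\le n-1$, not $\chi'(H)=\Delta(H)$). So I would state the proof as: universal vertex $\Rightarrow \Delta(G)=n-1$; $n$ even $\Rightarrow K_n$ Class~1 by Theorem~\ref{thm:complete-graphs}; restrict the colouring of $K_n$ to $G$; conclude $\chi'(G)=n-1$.
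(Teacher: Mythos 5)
Your proof is correct and is exactly the argument the paper itself sketches in the sentence preceding Theorem~\ref{thm:even-complete-graphs}: a universal vertex forces $\Delta(G)=n-1=\Delta(K_n)$, and restricting the $(n-1)$-edge-coloring of the even $K_n$ (Theorem~\ref{thm:complete-graphs}) to $E(G)$ stays proper, giving $\chi'(G)=\Delta(G)$. No differences worth noting.
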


For graphs with an odd number of vertices and a universal vertex, we have the following theorem.

\begin{theorem}[Plantholt's Result \cite{Plantholt}]
    Let $G$ be a graph with an universal vertex and an odd number of vertices. Then $G$ is Class~2 if, and only if, $G$ is subgraph-overfull.
    \label{thm:plantholt}
\end{theorem}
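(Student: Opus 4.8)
The plan is to prove the two implications separately. Write $n=|V(G)|$ for the (odd) number of vertices, and observe at the outset that a universal vertex forces $\Delta(G)=n-1$, which is even because $n$ is odd.

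For the direction ``subgraph-overfull $\Rightarrow$ Class~2'' I would give the usual counting argument, which in fact works for every graph: if $H\subseteq G$ satisfies $\Delta(H)=\Delta(G)$ and $|E(H)|>\Delta(H)\lfloor|V(H)|/2\rfloor$, then, since every colour class of a proper edge colouring of $H$ is a matching with at most $\lfloor|V(H)|/2\rfloor$ edges, any proper edge colouring of $H$ must use strictly more than $\Delta(H)$ colours, so $\chi'(H)\ge\Delta(H)+1$; combining $H\subseteq G$, $\Delta(H)=\Delta(G)$ and Vizing's upper bound $\chi'(G)\le\Delta(G)+1$ then pins $\chi'(G)$ at $\Delta(G)+1$, i.e.\ $G$ is Class~2. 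Note this uses neither the oddness of $n$ nor the universal vertex.

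The substance is the converse, which I would prove in contrapositive form: assuming $G$ is \emph{not} subgraph-overfull, show $\chi'(G)=n-1$. Suppose instead that $G$ is Class~2 and let $G'\subseteq G$ be an edge-minimal subgraph with $\chi'(G')=\Delta(G)+1=n$. First I would record the routine structural facts: $G'$ is connected (otherwise a component would be a smaller Class~2 subgraph, contradicting minimality) and edge-chromatic-critical, meaning $\chi'(G'-e)\le\Delta(G')$ for every edge $e$; since $n=\chi'(G')\le\Delta(G')+1$ and $\Delta(G')\le\Delta(G)=n-1$, we get $\Delta(G')=n-1$; and since a vertex of degree $n-1$ together with its neighbours already accounts for $n$ vertices while $G'\subseteq G$ has at most $n$, necessarily $|V(G')|=n$. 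The key step is then to invoke the classical lower bound of Vizing on edge-chromatic-critical graphs, namely $|E(H)|\ge\tfrac12\bigl(|V(H)|(\Delta(H)-1)+3\bigr)$. Applied to $G'$ this gives $|E(G')|\ge\tfrac12\bigl(n(n-2)+3\bigr)=\tfrac12(n-1)^2+1$, whereas, because $n$ is odd, the overfull threshold for $G'$ is $\Delta(G')\lfloor n/2\rfloor=(n-1)\cdot\tfrac{n-1}{2}=\tfrac12(n-1)^2$. Hence $G'$ is overfull, and as $G'\subseteq G$ with $\Delta(G')=\Delta(G)$ this makes $G$ subgraph-overfull, the desired contradiction.

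The main obstacle is precisely this last ingredient: having available (or, for a self-contained treatment, reproving) Vizing's edge-count inequality for critical graphs, which is where the real work lives --- it is classically obtained from Vizing's Adjacency Lemma together with fan/Kempe-chain recolouring. Everything else (the easy implication, the reduction to a critical subgraph with $|V(G')|=n$ and $\Delta(G')=n-1$, and the final comparison of $\tfrac12(n-1)^2+1$ with $\tfrac12(n-1)^2$) is routine. I would also remark that for graphs with a universal vertex the notions overfull, subgraph-overfull and neighbourhood-overfull coincide --- any subgraph attaining $\Delta=n-1$ must span $V(G)$ and contain every edge at a universal vertex, so $G$ itself is the edge-maximal candidate --- which is what makes the three formulations of the hypothesis interchangeable in the statement.
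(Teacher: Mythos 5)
Your proof is correct, but it is a genuinely different argument from the one the paper relies on: the paper cites Plantholt's result without proof and then uses his \emph{constructive} technique, which explicitly builds a saturated supergraph $H^*$ with exactly $\Delta\lfloor n/2\rfloor$ edges and $\Delta$-colours it, the colouring of $G$ being its restriction. Your easy direction (each colour class is a matching of size at most $\lfloor n/2\rfloor$) is the standard one. For the converse you pass to an edge-minimal Class~2 subgraph $G'$, correctly deduce that it is edge-chromatic-critical with $\Delta(G')=n-1$ and $|V(G')|=n$, and then invoke Vizing's classical lower bound $|E|\ge\tfrac12\bigl(|V|(\Delta-1)+3\bigr)$ for critical graphs, which lands exactly one edge above the overfull threshold $\tfrac12(n-1)^2$ when $n$ is odd; the arithmetic checks out, and your observation that for $\Delta=n-1$ the notions overfull, subgraph-overfull and neighbourhood-overfull coincide is also right. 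The trade-off is clear: your route is short and clean provided the critical-graph edge bound (a consequence of Vizing's Adjacency Lemma) is taken as known, but it is purely existential and produces no colouring, whereas Plantholt's constructive proof is precisely what Algorithm~\ref{alg:construction} and the paper's main algorithm need, since they must actually exhibit and then extend the $\Delta$-edge-colouring of $H^*$.
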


The technique presented by Plantholt~\cite{Plantholt} colors with $\Delta(G)$ colors the edges of any graph $G$ (not overfull) with an odd number of vertices such that $\Delta(G)=|V(G)|-1$. The algorithm colors the edges of a graph $H^*$ with the same maximum degree and number of vertices of $G$, and exactly $\displaystyle{\Delta(G)\cdot\left\lfloor\frac{|V(G)|}{2}\right\rfloor}$ edges. We call $H^*$ a \emph{saturated graph}, as the addition of a single edge to $H^*$ transforms $H^*$ into an overfull graph. Note that $H$ can already be a saturated graph. The main idea of Plantholt's method is: if we can color the edges of $H^*$ with $\Delta(H)$ colors, then we can color the edges of any subgraph of $H^*$ (which maintains the number of vertices and the maximum degree of $H$) with $\Delta(H)$ colors, since removing edges cannot increase the chromatic index of a graph.
Chen et. al~\cite{Chen} provided an algorithm to color the edges of any split graph with an odd maximum degree.

\begin{theorem}[\cite{Chen}]
    Let $G$ be a split graph with an odd maximum degree. Then $G$ is Class~1.
    \label{thm:Chen}
\end{theorem}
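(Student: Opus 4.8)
\noindent The strategy I would follow is to isolate a few structural cases according to the size of the clique, handling the two extreme cases by reducing to graphs with a universal vertex (Theorems~\ref{thm:complete-graphs} and~\ref{thm:even-complete-graphs}) and treating the intermediate case by a direct, simultaneous colouring of the clique and of the bipartite part. Write $G=((Q,S),E)$ with $Q$ a maximal clique, $S$ independent, $\Delta=\Delta(G)$ odd, and set $n_Q=|Q|$; since pendant and isolated vertices can be added back to any colouring of the remaining graph with $\Delta$ colours, I may assume $G$ has none. From $d(q)=(n_Q-1)+|N(q)\cap S|\le\Delta$ for every $q\in Q$ we get $n_Q\le\Delta+1$, which splits the proof into the cases $n_Q=\Delta+1$, $n_Q=\Delta$, and $n_Q\le\Delta-1$.

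If $n_Q=\Delta+1$, the inequality above forces $|N(q)\cap S|=0$ for all $q\in Q$, so $G$ is $K_{\Delta+1}$ together with isolated vertices and hence Class~1 by Theorem~\ref{thm:complete-graphs}, as $\Delta+1$ is even. If $n_Q=\Delta$, then $n_Q$ is odd and $|N(q)\cap S|\le 1$ for all $q\in Q$; I would colour $K_{n_Q}$ by a near-$1$-factorization with colours $\{1,\dots,\Delta\}$ so that each clique vertex $v_i$ misses exactly the colour $i$, and then colour the (at most one) edge from $v_i$ into $S$ with colour $i$. This is proper at every $v_i$, and it is proper at every $s\in S$ because the edges at $s$ run to distinct clique vertices and so receive distinct colours; it uses exactly $\Delta$ colours. (When the maximum degree is attained inside $S$ this in fact produces a graph with a universal vertex on an even number of vertices, so Theorem~\ref{thm:even-complete-graphs} also applies.)

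The remaining, and main, case is $n_Q\le\Delta-1$. Here I would first record that every $\Delta$-vertex lies in $Q$ — an $s\in S$ with $d(s)=\Delta$ would have all $\Delta$ of its neighbours in $Q$, forcing $n_Q\ge\Delta$ — so $d(s)\le\Delta-1$ for every $s\in S$ and $|N(q)\cap S|\le\Delta-n_Q+1$ for every $q\in Q$; a short degree count with these bounds moreover shows $G$ is not subgraph-overfull, so there is no local obstruction to being Class~1. Now colour $K_{n_Q}$ by any proper edge-colouring with $\Delta$ colours (a $1$- or near-$1$-factorization, plus spare colours); this leaves, at each $v_i$, a set of $\Delta-n_Q+1$ colours available for edges into $S$, and colouring the bipartite graph $B$ between $Q$ and $S$ becomes a list edge-colouring problem for $B$ in which the edge $v_is$ has a list of size $\Delta-n_Q+1\ge|N(v_i)\cap S|$. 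If every $s\in S$ satisfies $d(s)\le\Delta-n_Q+1$ (in particular if $n_Q\le 2$), then all edge lists have size at least $\Delta(B)$ and Galvin's theorem on list edge-colourings of bipartite graphs finishes the job immediately. Otherwise one has to do more: I would choose the colouring of $K_{n_Q}$ and the colouring of $B$ together, using the freedom in which colours are left free at the clique vertices adjacent to a high-degree $s$ to keep the available colours spread out along the star at $s$, and patch remaining conflicts by Vizing/Kempe-chain recolourings.

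The step I expect to be the real obstacle is exactly this last sub-case: $n_Q\ge 3$ together with independent vertices of degree larger than $\Delta-n_Q+1$ and, worst of all, many clique vertices of degree $\Delta$, where the naive separation of colours between the clique and the bipartite part fails and one must argue that a compatible joint colouring exists — essentially a Hall/deficiency argument for a degree-constrained subgraph of $B$ (a near-matching saturating the maximum-degree clique vertices and meeting each independent vertex often enough), combined with bookkeeping to keep the total number of colours equal to $\Delta$. As a partial shortcut and consistency check, Fournier's theorem already yields Class~1 whenever the maximum-degree vertices induce a forest, i.e.\ whenever at most two clique vertices have degree $\Delta$, so the difficulty is genuinely concentrated in the presence of many maximum-degree clique vertices.
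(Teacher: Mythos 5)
First, note that the paper does not prove Theorem~\ref{thm:Chen} at all: it is quoted verbatim from Chen et al.~\cite{Chen}, so there is no in-paper argument to compare yours against; your attempt must stand on its own as a proof of the cited result.

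It does not. Your two boundary cases are fine but essentially trivial: $n_Q=\Delta+1$ forces $G=K_{\Delta+1}$ plus isolated vertices, and $n_Q=\Delta$ forces each clique vertex to have at most one neighbour in $S$, where the near-$1$-factorization trick works (though your parenthetical about the maximum degree being attained in $S$ cannot occur there, since an $s$ with $d(s)=n_Q$ would contradict the maximality of $Q$). The entire content of the theorem sits in your third case, and there you stop at an admitted sketch. Concretely: after colouring $K_{n_Q}$, each clique vertex has only $\Delta-n_Q+1$ free colours, so the list sizes in the bipartite part are $\Delta-n_Q+1$ while an independent vertex may have degree up to $\Delta-1$; Galvin's theorem is simply inapplicable whenever some $s\in S$ has $d(s)>\Delta-n_Q+1$. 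This is not a fringe situation --- take $\Delta=7$, $|Q|=6$, one independent vertex adjacent to five clique vertices, and pendant-free padding to raise the clique degrees to $7$: each clique vertex then has only two free colours, and the five lists at the high-degree independent vertex must jointly contain five distinct colours, which forces a coordinated choice of the $K_6$ colouring. Your proposed remedies ("choose the colouring of $K_{n_Q}$ and of $B$ together", "a Hall/deficiency argument", "patch remaining conflicts by Vizing/Kempe-chain recolourings") are exactly the statements that would need to be turned into lemmas and proved; as written they are a plan, not an argument, and you yourself flag this sub-case as the real obstacle. Since the hard case is where Chen, Fu and Ko's actual work lies, the proposal has a genuine gap and does not establish the theorem.
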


Almeida et. al~\cite{Sheila} presented a sufficient condition for a split graph $G$ to be Class~1.

\begin{theorem}[\cite{Sheila}]
    Let $G=((Q,S),E)$ be a split graph. If exists some $v \in S$ s.t $\displaystyle{\left\lceil\frac{|Q|}{2}\right\rceil\leq d(v)\leq\frac{\Delta(G)}{2}}$, then $G$ is Class~1.
    \label{thm:sheila}
\end{theorem}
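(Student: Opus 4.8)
The plan is to split the edge set of $G$ into an ``augmented clique'' and a bipartite remainder, colour the former with $\Delta := \Delta(G)$ colours using the known results, and then extend the colouring. Write $n_Q = |Q|$ and $d = d(v)$. Since $Q$ is a maximal clique, no vertex of $S$ is adjacent to all of $Q$, so $d(s) \le n_Q - 1$ for every $s \in S$; since $d \ge \lceil n_Q/2 \rceil \ge 1$ the vertex $v$ has a neighbour in $Q$, which then has degree at least $n_Q$, so $\Delta \ge n_Q$, and the hypothesis moreover gives $\Delta \ge 2d \ge 2\lceil n_Q/2 \rceil$. If $\Delta$ is odd we are done immediately by Theorem~\ref{thm:Chen}, so I would assume $\Delta$ is even, in which case $d \le \Delta/2$ is an inequality between integers.

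First I would bound $\chi'(H)$ for $H := G[Q \cup \{v\}]$, claiming $\chi'(H) \le \Delta$. If $n_Q$ is odd then $|V(H)| = n_Q + 1$ is even and any neighbour of $v$ in $Q$ is a universal vertex of $H$, so $H$ is Class~1 by Theorem~\ref{thm:even-complete-graphs} and $\chi'(H) = \Delta(H) = n_Q \le \Delta$. If $n_Q$ is even then a neighbour of $v$ in $Q$ is still a universal vertex of $H$ while $|V(H)| = n_Q + 1$ is odd, so Theorem~\ref{thm:plantholt} applies; a direct edge count shows $H$ is overfull exactly when $d > n_Q/2$. When $d = n_Q/2$, $H$ is not (subgraph-)overfull, so $\chi'(H) = \Delta(H) = n_Q \le \Delta$; when $d > n_Q/2$ we have $\Delta \ge 2d \ge n_Q + 2$, and Vizing's bound gives $\chi'(H) \le \Delta(H) + 1 = n_Q + 1 \le \Delta$. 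Either way $H$ has a proper $\Delta$-edge-colouring $\phi$.

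Next I would extend $\phi$ over the edges of $G$ outside $H$, which form a bipartite graph $B$ with parts $Q$ and $S \setminus \{v\}$. For $q \in Q$ let $L(q)$ be the set of colours not used by $\phi$ at $q$; then $|L(q)| = \Delta - d_H(q) \ge d_G(q) - d_H(q) = d_B(q)$, and each $s \in S \setminus \{v\}$ has the whole palette available, of size $\Delta > n_Q - 1 \ge d_B(s)$. I would then colour $B$ so that each edge $qs$ receives a colour from $L(q)$, which is possible by a list--edge--colouring argument for bipartite graphs (Galvin's theorem, or a Hall-type iteration treating the vertices of $Q$ one at a time) \emph{provided $\phi$ is chosen so that the lists $L(q)$ are sufficiently spread out} --- for instance by obtaining $\phi$ as the restriction of a suitable $1$-factorisation-based colouring of a complete graph, or by an explicit round-robin construction on $Q \cup \{v\}$, arranged so that the $n_Q - d$ edges joining $v$ to $Q \setminus N(v)$ in that complete graph carry pairwise distinct colours. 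This is precisely where the two inequalities enter: $d \ge \lceil n_Q/2 \rceil$ forces $v$ to meet enough of $Q$ for those ``missing'' colours to be distributable and for $H$ to stay within the $\Delta$-colour budget, while $d \le \Delta/2$ keeps $\Delta$ large enough relative to $d$ that the pool of colours is never exhausted at a vertex of $S \setminus \{v\}$. The resulting colouring of $G$ uses $\Delta$ colours, proving that $G$ is Class~1.

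The routine parts are the reductions and the case analysis for $\chi'(H) \le \Delta$, which are immediate from Theorems~\ref{thm:even-complete-graphs}, \ref{thm:plantholt} and \ref{thm:Chen}. The hard part will be the coordination in the final step: a careless $\Delta$-colouring of $H$ may fail to extend --- for example if all vertices of $N(v) \cap Q$ end up missing a common colour while some $s \in S \setminus \{v\}$ has two neighbours among them --- so the colouring of $H$ must be tailored to leave residual lists that are compatible with a proper colouring of the bipartite remainder, and the feasibility of that bipartite step must then be checked against the list sizes.
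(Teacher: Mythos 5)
The paper does not prove Theorem~\ref{thm:sheila}; it is quoted from~\cite{Sheila} as a known result, so there is no in-paper argument to compare you against, and your proposal has to stand on its own. Its preliminary reductions do: $d(s)\le|Q|-1$ by maximality of $Q$, $\Delta(G)\ge|Q|$, the reduction to even $\Delta$ via Theorem~\ref{thm:Chen}, and the case analysis giving $\chi'\left(G[Q\cup\{v\}]\right)\le\Delta(G)$ via Theorems~\ref{thm:even-complete-graphs} and~\ref{thm:plantholt} (including the computation that $H$ is overfull exactly when $d(v)>|Q|/2$, and the observation that for this $H$ subgraph-overfull reduces to overfull) are all correct. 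The gap is the extension over the bipartite remainder $B$, which is where the entire content of the theorem lives. The list you hand to an edge $qs$ is $L(q)$, of size only $\Delta-d_H(q)\ge d_B(q)$: Galvin's theorem needs lists of size $\chi'(B)=\Delta(B)$, the Borodin--Kostochka--Woodall refinement needs size at least $\max\{d_B(q),d_B(s)\}$, and a greedy or Hall-type pass needs $|L(q)|$ to survive the up to $d_B(s)-1$ colors already used at $s$; none of these is guaranteed by $|L(q)|\ge d_B(q)$. And the failure mode is real: e.g.\ with $\Delta=|Q|+2$ one can have three neighbors $q_1,q_2,q_3$ of a common $s\in S\setminus\{v\}$ whose residual lists are the same $2$-element set under a badly chosen $\phi$, making the three edges $sq_i$ uncolorable.

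You flag this yourself (``provided $\phi$ is chosen so that the lists $L(q)$ are sufficiently spread out,'' and the closing paragraph), but flagging it is not resolving it: producing a $\Delta$-coloring of $H$ whose missing colors are distributed compatibly with $B$ is precisely where the hypotheses $\lceil|Q|/2\rceil\le d(v)$ and $d(v)\le\Delta(G)/2$ must do their real work, and your sketch extracts nothing from them beyond $\Delta\ge 2d(v)$ and $\chi'(H)\le\Delta$. What is needed is a structural statement in the spirit of Lemma~\ref{lem:distinct} of this paper --- which, for a specific saturated graph and a specific coloring, proves that the missing colors of the deficient clique vertices are mutually distinct --- rather than an appeal to general list-edge-coloring theorems whose size requirements are not met. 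As written, the proposal is a reasonable plan whose decisive step is absent, so it does not constitute a proof.
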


\subsection{($\sigma=2$)-Split graphs}\label{subsec:sigma2_split}

As mentioned before, Plantholt's method can color the edges of any graph, not overfull, with a universal vertex and an odd number of vertices. Since $(\sigma=2)$-split graphs (without pendants) have a universal vertex, it is possible to color the edges of any $(\sigma=2)$-split graph with an odd number of vertices using such a method. However, there is no specific way to obtain the saturated graph $H^*$ from a graph $H$, in general. 
Interestingly, the structure of some $(\sigma=2)$-split graphs allows us to use an specific algorithm to build a $H^*$.  
In general, there is just one property that the saturated graph must satisfy: its number of edges, which is exactly $\displaystyle{|E(H^*)|=\Delta(H^*)\cdot\left\lfloor{\frac{|V(H^*)|}{2}}\right\rfloor}$. The algorithm we present to construct $H^*$ can be applied to a $(\sigma=2)$-split graph $H$ in which $\delta(H) \leq \frac{|V(H)|-1}{2}$.

\begin{algorithm}[H]
    \caption{Construction of a special saturated graph \label{alg:construction}}
    \SetAlgoLined
    \KwData{
    A graph $H=((Q,S),E)$ with a vertex $s_1$, such that $d(s_1) \leq \frac{\Delta(H)}{2}$, and a universal vertex 
    }
    \If{$|Q|< |V(H)|-1$}{
        Add to $Q$ each vertex of $S$ but $s_1$, obtaining a clique $Q^*$ of size $|V(H)|-1$
        }
    \If {$|E^*| <\Delta(H^*)\cdot\left\lfloor{\frac{|V(H^*)|}{2}}\right\rfloor$, $E^* = E \cup E'$ where $E'$ are the edges added to obtain $Q^*$}{
        Add to $s_1$ the remaining edges 
        }
    \Return{$H^* = ((Q^*,S^*=\{s_1\}), E^*)$}

\end{algorithm}


\begin{lemma}\label{lem:satG}
    If $H$ is $(\sigma=2)$-split graph, not overfull, without pendant vertices, with an odd number of vertices and with $\delta(H)=d(s_1) \leq \frac{|V(H)|-1}{2}$, then the saturated graph $H^*$ presented in Algorithm~\ref{alg:construction} can be built.
    
\end{lemma}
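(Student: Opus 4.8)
The plan is to verify that Algorithm~\ref{alg:construction} produces a well-defined graph $H^*$ that is simple, has the same maximum degree and the same (odd) number of vertices as $H$, and has exactly $\Delta(H^*)\cdot\left\lfloor\frac{|V(H^*)|}{2}\right\rfloor$ edges, so that it is indeed a saturated graph. I would first note that since $H$ is a $(\sigma=2)$-split graph without pendant vertices, by Theorem~\ref{thm:caracterizacao_split2adm} it has a universal vertex $u$; hence $\Delta(H)=|V(H)|-1=n-1$, where $n=|V(H)|$ is odd. Because $n-1$ is even, a saturated graph on $n$ vertices with maximum degree $n-1$ must have exactly $(n-1)\cdot\frac{n-1}{2}$ edges, while the complete graph $K_n$ has $\binom{n}{2}=\frac{n(n-1)}{2}$ edges; the difference is $\frac{n-1}{2}$, so a saturated graph is obtained from $K_n$ by deleting a set of $\frac{n-1}{2}$ edges that reduces the degree of exactly one vertex (down to $\frac{n-1}{2}$) while keeping every other vertex at degree $n-1$. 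The deleted edges therefore form a star centered at that low-degree vertex.

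With this target in mind, the key steps are: (i) show the first \texttt{if} correctly moves all vertices of $S$ except $s_1$ into the clique, yielding $Q^*$ with $|Q^*|=n-1$ and leaving $S^*=\{s_1\}$ — here one uses that $Q$ is a maximal clique and $u\in Q$, so every $s\in S\setminus\{s_1\}$ is adjacent to $u$ and the construction only adds edges, never duplicating existing ones; (ii) check the edge count after this step, namely that $H$ together with the added edges $E'$ has $|E^*|=\binom{n-1}{2}+d_H(s_1)$ edges, which is at most $(n-1)\cdot\frac{n-1}{2}$ precisely because the hypothesis gives $d_H(s_1)=\delta(H)\le\frac{n-1}{2}$; (iii) show the second \texttt{if} adds exactly $(n-1)\cdot\frac{n-1}{2}-\binom{n-1}{2}-d_H(s_1)=\frac{n-1}{2}-d_H(s_1)\ge 0$ further edges incident to $s_1$, and that there are enough non-neighbors of $s_1$ in $Q^*$ to receive these edges without creating multi-edges (there are $n-1-d_H(s_1)\ge\frac{n-1}{2}-d_H(s_1)$ such non-neighbors, since $s_1$ has at most $n-2$ neighbors by the no-pendant and not-universal assumptions, so this is fine); and finally (iv) conclude that in $H^*$ every vertex of $Q^*$ has degree $n-1$ and $s_1$ has degree exactly $\frac{n-1}{2}$, so $\Delta(H^*)=n-1=\Delta(H)$, $|V(H^*)|=n$, and $|E(H^*)|=\Delta(H^*)\cdot\left\lfloor\frac{n}{2}\right\rfloor$, i.e.\ $H^*$ is saturated and $H\subseteq H^*$ with $\Delta$ preserved.

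The main obstacle I anticipate is bookkeeping rather than conceptual: one must be careful that $d_H(s_1)=\delta(H)$ really is the relevant quantity and that the hypothesis $\delta(H)\le\frac{n-1}{2}$ is exactly what makes the number of edges we still need to add nonnegative — if $\delta(H)>\frac{n-1}{2}$ the algorithm would be asked to delete edges, not add them, which it cannot do, and this is precisely the case the lemma (correctly) excludes. A secondary point to handle cleanly is the degenerate possibility that $H$ is already saturated, in which case the second \texttt{if} adds nothing; the argument should make clear the construction still returns a valid $H^*$ (namely $H$ with its independent set, except $s_1$, absorbed into the clique). I would also remark that $s_1$ must not be universal for the partition $S^*=\{s_1\}$ to be meaningful, but this is guaranteed since $d_H(s_1)=\delta(H)\le\frac{n-1}{2}<n-1$ for $n\ge 3$.
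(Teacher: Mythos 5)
Your argument is essentially the paper's own proof: both reduce the lemma to the edge count showing that the clique $Q^*$ on $n-1$ vertices contributes $\binom{n-1}{2}$ edges, that a saturated graph needs $\frac{(n-1)^2}{2}$ edges in total, and that the deficit of $\frac{n-1}{2}$ edges can be supplied by $s_1$ precisely because $d_H(s_1)\le\frac{n-1}{2}$; you simply spell out the bookkeeping (no multi-edges, enough non-neighbors of $s_1$, the degenerate already-saturated case) that the paper leaves implicit. One small slip in your step (iv): it is not true that every vertex of $Q^*$ has degree $n-1$ in $H^*$ — only the $\frac{n-1}{2}$ neighbors of $s_1$ do, while the remaining clique vertices have degree $n-2$ (this is exactly the paper's Remark~\ref{rem:verticesofH*}); your conclusion $\Delta(H^*)=n-1$ still holds, since the universal vertex of $H$ is adjacent to $s_1$ and keeps degree $n-1$, so the proof is unaffected.
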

     
\begin{proof}
    Since $H$ is $(\sigma=2)$-split graph without pendant vertices, $\Delta(H) = |V(H)|- 1 $. The saturated graph $H^*$, in this case, must have exactly $$(|V(H)|-1)\cdot \frac{|V(H)|-1}{2} = \frac{(|V(H)|-1)^2}{2}$$ edges (because $|V(H)|$ is odd). 
    If $H$ has one big clique of size $|V(H)|-1$, this clique has $$\frac{(|V(H)|-1)(|V(H)|-2)}{2} = \frac{|V(H)|^2-3|V(H)|+2}{2}$$ edges. This number subtracted from the total number of edges of $H^*$ is $\frac{|V(H)|-1}{2}$, i.e., $\frac{|V(H)|-1}{2}$ edges are needed to obtain a saturated graph. If $$d(s_1) \leq \frac{|V(H)|-1}{2},$$ it is possible to finish the construction of $H^*$. 
    
\end{proof}

\begin{remark}\label{rem:verticesofH*}
    Given a vertex $q \in Q(H^*)$, $\Delta - 1 \leq d_{H^*}(q) \leq \Delta$. Moreover, for the only vertex $s_1 \in S(H^*)$, $d_{H^*}(s_1)= \frac{\Delta}{2}$, which means that $s_1$ is adjacent to half of the clique in $H^*$.
\end{remark}

\section{A Polynomial-time algorithm}\label{sec:results}

In this section, we propose a polynomial-time algorithm to color the edges of any $(\sigma=3)$-split graph with a vertex $s_1 \in S(G)$ such that $d(s_1) \leq \frac{|V(G)|-1}{2}$ and $s_1$ is adjacent to a $\Delta$-vertex of $G$. As mentioned before, split graphs can be partitioned into three parts according to the $t$-admissibility problem: $(\sigma=1)$, $(\sigma=2)$ and $(\sigma=3)$-split graphs. The problem is already solved and proved to be polynomial for $(\sigma=1)$ and $(\sigma=2)$ split graphs~\cite{couto2023new1}. Once again we highlight that pendant vertices can be removed from a split graph and their edges can be colored after the edge coloring of the remaining graph, as explained in~\cite{couto2023new1}. So, we consider $(\sigma=3)$-split graphs without pendant vertices and, by Theorem~\ref{thm:caracterizacao_split2adm},  without a universal vertex. Moreover, by Theorem~\ref{thm:Chen}, if $\Delta(G)$ is odd, then $G$ is Class~1. Thus, our input graphs have even maximum degree.

\begin{remark} \label{remark:subgraph}
    A $(\sigma=3)$-split graph $G$ has an induced subgraph $H$ such that $(\sigma(H)=2)$ and $\Delta(G)=\Delta(H)$.
\end{remark}
 Indeed, any subgraph of $G$ induced by a $\Delta$-vertex and its neighborhood is a $(\sigma=2)$-split graph with the same $\Delta$. 
 
 Note that, by hypothesis, there is vertex $s_1 \in S(H)$, adjacent to some $\Delta$-vertex.
In the following, we present the main idea of Algorithm~\ref{alg:basic}.

  First, we determine $H$, a $(\sigma=2)$-split induced subgraph of $G$ (without pendants). Since $H$ has a universal vertex and $\Delta(H)$ is even, it can be edge-colored with Plantholt's Method using a saturated graph $H^*$ according to  Algorithm~\ref{alg:construction} described in Section~\ref{subsec:sigma2_split}. Next, for each edge in $E(H^* \setminus H)$, we update the list of missing colors of the vertices of $G$,  denoted by $L(v), \forall v\in V(G)$, with the colors associated to these edges of $E(H^* \setminus H)$. Note that the list of missing colors of vertices that are in $V(G\setminus H)$ are updated with $\Delta$ colors. Finally, we reobtain the graph $G$ which is now partially edge-colored. It remains to color $E(G\setminus H)$. The main idea here is to \emph{extend Plantholt's coloring} and to use the missing colors to finish the edge coloring of $G$.


 \paragraph{Extending Plantholt's Coloring}
 
 Remark~\ref{rem:2possibilities} follows from Algorithm~\ref{alg:construction}

     \begin{remark}\label{rem:2possibilities}
         For any vertex $x \in V(G), d_G(x)\leq d_{H^*}(x)+1$.
     \end{remark}

     Thus, given an edge $wx, w \in (V(G) \setminus V(H^*)$ there are only two possibilities: there is an edge of $E(H^* \setminus H)$ incident to $x$, say $xx'$, whose color can be attributed to $wx$; or not (it occurs iff $d_G(x) = d_{H^*}(x)+1$).
     

      

  \begin{lemma}\label{lem:missing-color}
    Let $wx \in E(G \setminus H)$ such that there is not an edge $xx'\in E(H^* \setminus H)$. Then there is only one color available to be assigned to $wx$.
\end{lemma}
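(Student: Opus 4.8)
The plan is to pin down where the endpoints of $wx$ sit relative to the split partition, deduce that $x$ is necessarily a clique vertex of $H^\ast$ of degree exactly $\Delta-1$ in $H^\ast$, and then read off that $x$ has a unique missing color which is forced onto $wx$. First I would fix the construction: let $u$ be a $\Delta$-vertex of $G$ adjacent to $s_1$ (such a vertex exists by hypothesis) and let $H=G[\{u\}\cup N_G(u)]$, so that $s_1\in N_G(u)\subseteq V(H)=V(H^\ast)$ and $u\neq s_1$. Since $s_1$ belongs to the independent set $S(G)$ and $u$ is adjacent to $s_1$, the vertex $u$ cannot lie in $S(G)$, hence $u\in Q(G)$; consequently every clique vertex of $G$ is adjacent to $u$ (or equals $u$), i.e. $Q(G)\subseteq\{u\}\cup N_G(u)=V(H^\ast)$, and therefore $V(G)\setminus V(H^\ast)\subseteq S(G)$.

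Now take $wx\in E(G\setminus H)$ with $w\in V(G)\setminus V(H^\ast)$, as in the paragraph preceding the statement. Because $H$ is an induced subgraph, $x$ cannot also lie outside $V(H^\ast)$ — otherwise $wx$ would join two vertices of the independent set $S(G)$ — so $x\in V(H^\ast)$; and since $w\in S(G)$ while $S(G)$ is independent, $x\notin S(G)$, whence $x\in Q(G)\subseteq Q(H^\ast)$ and in particular $x\neq s_1$. By Remark~\ref{rem:verticesofH*} this gives $\Delta-1\le d_{H^\ast}(x)\le\Delta$. The hypothesis that $x$ is incident to no edge of $E(H^\ast\setminus H)$ means, by Remark~\ref{rem:2possibilities} and the equivalence recorded just after it, that $d_G(x)=d_{H^\ast}(x)+1$; together with $d_G(x)\le\Delta$ this forces $d_{H^\ast}(x)=\Delta-1$, hence $d_G(x)=\Delta$, and (again because $x$ has no incident edge in $E(H^\ast\setminus H)$) also $d_H(x)=d_{H^\ast}(x)=\Delta-1$.

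To finish, note that after Plantholt's $\Delta$-edge-coloring of $H^\ast$ is restricted to $H$, the $\Delta-1$ edges of $H$ incident to $x$ carry $\Delta-1$ distinct colors, so exactly one color $c$ is missing at $x$; since no color of $E(H^\ast\setminus H)$ is recorded at $x$, its list is $L(x)=\{c\}$. Moreover $x$ is incident to precisely $d_G(x)-d_H(x)=1$ edge of $E(G\setminus H)$, namely $wx$ itself, so $L(x)$ stays equal to $\{c\}$ throughout the coloring of $E(G\setminus H)$ until $wx$ is treated. Hence $c$ is the only color that can be assigned to $wx$, as claimed. I expect the only delicate point to be the structural step of the second paragraph, showing $x\in Q(G)$ (equivalently $x\neq s_1$): this is exactly what rules out $x$ being the low-degree vertex $s_1$ of $H^\ast$, which has $\Delta/2$ missing colors and would otherwise break the count; once it is in place, the degree bookkeeping and the single-missing-color conclusion follow immediately from Remarks~\ref{rem:verticesofH*} and~\ref{rem:2possibilities}.
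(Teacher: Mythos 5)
Your proof is correct and follows essentially the same route as the paper's: identify that the hypothesis forces $d_G(x)=\Delta$ and $d_{H^*}(x)=\Delta-1$, and conclude that the $\Delta$-edge-colored $H^*$ leaves exactly one missing color at $x$. You simply fill in the structural bookkeeping ($x\in Q(H^*)$, the degree equalities) that the paper's two-sentence proof takes for granted.
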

\begin{proof}
    Indeed, note that this happens when $d_G(x)= \Delta$ and $d_{H^*}(x)=\Delta-1$. Since $H^*$ is $\Delta$-edge colored by Plantholt's coloring, there is only one missing color available to color $wx$. 
    
\end{proof}

\begin{lemma}\label{lem:distinct}
Given a saturated graph $H^*$ (according to Algorithm~\ref{alg:construction}), the missing colors of each $(\Delta-1)$-vertex of $H^*$ are mutually distinct.
\end{lemma}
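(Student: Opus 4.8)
The plan is to analyze the structure of the saturated graph $H^*$ produced by Algorithm~\ref{alg:construction} and pin down exactly which vertices end up with degree $\Delta-1$. First I would recall, via Remark~\ref{rem:verticesofH*}, that every clique vertex $q \in Q(H^*)$ has degree either $\Delta-1$ or $\Delta$, and the single independent-set vertex $s_1$ has degree $\frac{\Delta}{2} < \Delta-1$ (so $s_1$ is not a $(\Delta-1)$-vertex and need not be considered). Hence the $(\Delta-1)$-vertices of $H^*$ are precisely the clique vertices not adjacent to $s_1$. Indeed, $Q^*$ is a clique of size $|V(H^*)|-1 = \Delta$, so within the clique every $q$ has degree $\Delta-1$ to the other clique vertices; adding $s_1$ raises the degree of exactly those $q \in N(s_1)$ to $\Delta$, leaving the rest at $\Delta-1$. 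Write $A = \{q \in Q^* : q \notin N_{H^*}(s_1)\}$ for this set.

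Next I would count. Since $H^*$ is $\Delta$-edge-colored by Plantholt's method and $|L(v)| = \Delta - d_{H^*}(v)$, each $q \in A$ has exactly one missing color, and each $q \in N_{H^*}(s_1)$ has none. So the claim reduces to showing that these $|A|$ singleton missing-color sets, one per vertex of $A$, are pairwise distinct — equivalently, no color is simultaneously missing at two distinct $(\Delta-1)$-vertices of $H^*$. The key step is a parity/edge-counting argument on each color class: fix a color $c$ among the $\Delta$ colors used. The set of edges colored $c$ forms a matching $M_c$ in $H^*$. A vertex $v$ misses color $c$ exactly when $v$ is unsaturated by $M_c$. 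Now I count $|E(H^*)| = \Delta \cdot \frac{|V(H^*)|}{2} = \Delta \cdot \frac{\Delta}{2}$ (using that $|V(H^*)| = \Delta+1$ is odd), so summing over the $\Delta$ color classes forces each matching $M_c$ to have size exactly $\frac{\Delta}{2} = \frac{|V(H^*)|-1}{2}$; that is, each color class is a near-perfect matching leaving exactly one vertex uncovered. Therefore each color $c$ is missing at exactly one vertex of $H^*$. In particular it is impossible for a single color to be missing at two different $(\Delta-1)$-vertices, which is precisely the assertion.

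The main obstacle I anticipate is making the counting airtight: one must confirm that $|V(H^*)| = \Delta(H^*)+1$ (so that $H^*$ has an odd number of vertices and $\Delta+1$ colors genuinely behave as near-perfect matchings) and that $|E(H^*)|$ is exactly $\Delta\cdot\frac{|V(H^*)|}{2}$ by construction — this is exactly the saturation property guaranteed by Lemma~\ref{lem:satG}. Once those two facts are in hand, the equality $|E(H^*)| = \Delta \cdot \frac{|V(H^*)|}{2}$ combined with each $|M_c| \le \lfloor |V(H^*)|/2 \rfloor = \frac{\Delta}{2}$ forces equality in every term, giving the near-perfect matching structure with no slack. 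A secondary subtlety is simply bookkeeping: one should note explicitly that $s_1$, with degree $\frac{\Delta}{2}$, fails to be a $(\Delta-1)$-vertex (as $\Delta \ge 4$ in our setting, since $\Delta$ is even and $H$ has no universal vertex forcing small cases away), so the lemma's scope is exactly the clique vertices in $A$, and the near-perfect-matching argument then delivers the result with no further case analysis.
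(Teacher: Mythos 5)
Your proof is correct, but it takes a genuinely different route from the paper. The paper's proof zeroes in on the edges incident to the independent-set vertex $s$ of $H^*$: it shows by contradiction (a pigeonhole on the $\frac{\Delta}{2}-1$ available $\Delta$-vertices other than $q$) that the color of each edge $qs$ must be missing at some $(\Delta-1)$-vertex, and then, since these $\frac{\Delta}{2}$ colors are pairwise distinct and there are exactly $\frac{\Delta}{2}$ $(\Delta-1)$-vertices each missing exactly one color, concludes the missing colors are mutually distinct. You instead prove the stronger, more standard structural fact that in the saturated graph every color class of a $\Delta$-edge-coloring is a near-perfect matching: since $|E(H^*)|=\Delta\cdot\frac{\Delta}{2}$ and each of the $\Delta$ color classes is a matching of size at most $\left\lfloor\frac{\Delta+1}{2}\right\rfloor=\frac{\Delta}{2}$, equality is forced everywhere, so each color is missed by exactly one vertex of $H^*$ and in particular cannot be missed by two $(\Delta-1)$-vertices. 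Your argument is arguably cleaner and does not require identifying which colors are the missing ones, whereas the paper's argument additionally pins down that the missing colors are precisely the colors appearing on the edges at $s$ -- information that could be reused downstream. One small slip to fix: you write $|E(H^*)|=\Delta\cdot\frac{|V(H^*)|}{2}$, but since $|V(H^*)|=\Delta+1$ is odd the saturation condition of Lemma~\ref{lem:satG} is $|E(H^*)|=\Delta\cdot\left\lfloor\frac{|V(H^*)|}{2}\right\rfloor=\Delta\cdot\frac{\Delta}{2}$; you use the correct value (and the correct floor in the matching bound) in the rest of the argument, so this is purely notational.
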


\begin{proof}
    Let $q$ a $\Delta$-vertex of $H^*$ and $s$ be the vertex of $S(H^*)$. 
    Note that there are $\frac{\Delta}{2}$ vertices adjacent to $s$ and other $\frac{\Delta}{2}$ vertices not adjacent to $s$. We affirm that the color of $qs$ is a missing color of some $(\Delta-1)$-vertex of $H^*$. Suppose by contradiction that each $(\Delta-1)$-vertex has an incident edge colored with the color of $qs$. In this case, these $\frac{\Delta}{2}$ edges are incident to the $\Delta$-vertices of $H^*$ but $q$. Thus, there is a conflict of color at one of them, which leads to a contradiction.
    Similar arguments can be applied to prove that each color of the edges incident to $s$ in $H^*$ is a missing color of some $(\Delta-1)$-vertex of $H^*$. Moreover, by Remark~\ref{rem:verticesofH*}, each $(\Delta-1)$-vertex of $H^*$ has exactly one missing color, and since $H^*$ is properly edge colored, these colors are mutually distinct.

\end{proof}

 \begin{remark}\label{rem:difdelta}
     Let $v \in V(H)$. If $d_{H^*}(v) > d_{H}(v)$ and $d_G(v)= d_{H^*}(v)$, then there is a color assignment (not necessarily proper) to the edges of $E(G\setminus H)$ which are incident to $v$ provided by Plantholt's Method.
 \end{remark}

We denote the color assigned to an edge $e$ of $E(G\setminus H^*)$ through a correspondence with some edge $e'$ of $E(H^* \setminus H)$ colored using Plantholt's Method by \emph{color}$(e)_{P(e')}, e \in E(G\setminus H^*)$ and $ e' \in E(H^* \setminus H)$.

The main idea of extending Plantholt's edge coloring is: for each edge edge $wx \in E(G\setminus H)$, if there is an edge $xx'$ of $E(H^* \setminus H)$, $x,x' \in Q(G)$, then $wx$ is colored with the color of $xx'$. This extension can provoke some color conflicts concerning the colors incident to $w$. 
Finally, in order to finish the edge coloring of $G$, some missing colors must be applied to the edges in cases in which the degree of the clique vertex in $G$ is greater than the degree of this vertex in $H^*$. Each conflict, if any, is solved by a sequence of \emph{swap of colors}, according to Lemma~\ref{lem:troca}.

\paragraph{Providing a proper edge coloring} The process of extending Plantholt's coloring 
constructs an auxiliary structure $D^*$ called \emph{Color Trail}, which is a graph that can be briefly described as follows: 
  The Color Trail $D^*$ contains all edges incident to a vertex $w \in V(G \setminus H)$ with color conflicts, and these edges are placed from the left to the right beginning with the color with the greatest number of conflicting edges. Each edge $wx \in E(G \setminus H)$ is represented as a \emph{solid edge}. If there is an edge $xx' \in E(H^* \setminus H)$ and the color assigned to $xx'$ by Plantholt's algorithm is assigned to $wx$ by Algorithm~\ref{alg:basic}, then we say that $wx$ \emph{is paired} with $xx'$, which is represented in the Color Trail by a \emph{dashed line}. If the color of the edge $wx$ does not come from an edge of $E(H^* \setminus H)$, then in the Color Trail there is no dashed edge paired with $wx$. 
 There is only one rule to order the edges in the construction of the Color Trail: for each color, the rightmost edge is the one with no dashed pair, if one exists. Observe Figure~\ref{fig:WG1}.

\begin{algorithm}[H]
    \caption{Main Algorithm \label{alg:basic}}
    \SetAlgoLined
    \KwData{
    An edge coloring of $H^*$, the graph $G$ and the subgraph $H$.
    }
    \ForAll{$wx \in E(G\setminus H)$}{
    
        \If{$\Delta=d_G(x)>d_{H^*}(x)$}{
            Assign the missing color of $x$ to $wx$\\
            Remove $c(wx)$ from $L(w)$\\
        }
        \Else{
            \If{$d_G(x)=d_{H^*}(x)$}{
                $c_G(wx) \leftarrow c_{H^*}(xx'), xx'\in E(H^* \setminus H)$\\
                Remove $c(xx')$ from $L(w)$\\
            }
        
            \Else{
                \ForAll{$xx'\in E(H^* \setminus H)$}{
                    \If{$c(xx') \in L(w)$}{
                       $c_G(wx) \leftarrow c_{H^*}(xx')$ \\
                       Remove $c(xx')$ from $L(w)$\\
                    }
                    \textbf{break}\\
                    \Else{
                        $c_G(wx) \leftarrow c_{H^*}(xx')$, for some edge $xx' \in E(H^* \setminus H)$\\ 
                        Remove $c(xx')$ from $L(w)$\\
                        }
                    }
                }  
            }
    }
    \ForAll{$w \in V(G \setminus H)$}{ 
         \If{there is a conflict of colors at $w$} {
            Let $C=\{c_1,c_2, \ldots, c_j\},~j\geq 2$ be a non-increasing sequence of colors w.r.t the number of conflicts\\
            Construct the color trail $D^*$\\
            \ForAll{$i=2,\ldots, d(w)$}{
                Color-Swap($G,H^*,\mathcal{L},D^*,i$)\\
                \ForAll{$k=1, 2, \ldots, i-2 $}{
                    \ForAll{$j=i-1, \ldots, 1$}{
                        \If{$c_G(wx_i)= c_G(wx_j)$}{
                        Color-Swap($G, H^*,\mathcal{L},D^*,i$)
                        }
                    }
                    {\bf break}
                }
            }
        } 
    }  
    \Return{A $\Delta$-edge coloring of $G$}

\end{algorithm}

 \begin{lemma}\label{lem:troca}
     Let $w \in V(G\setminus H)$, $x,y \in Q(G)$, and suppose color$(wx)_{P(e')}=$ color$(wy)_{P(e'')}$. There is a swap of colors which leads to a proper edge coloring assignment.
 \end{lemma}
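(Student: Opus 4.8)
The plan is to resolve the clash at $w$ by re-colouring exactly one of the two offending edges with a colour missing at $w$, using a Kempe-type $\{c,\gamma\}$-exchange to free that colour at the relevant clique endpoint when it is not already available.

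\emph{Set-up.} Let $e'=xx'$ and $e''=yy'$ be the edges of $E(H^*\setminus H)$ with which $wx$ and $wy$ are paired, so that $c_{H^*}(e')=c_{H^*}(e'')=c=c_G(wx)=c_G(wy)$. Since $H^*$ is a proper $\Delta$-edge-colouring, $e'$ and $e''$ are disjoint, hence $x,x',y,y'$ are four distinct vertices of $Q(H^*)$. Because $w$ carries the colour $c$ twice, its incident edges use at most $d_G(w)-1\le\Delta-1$ distinct colours, so there is a colour $\gamma\in L(w)$. The Color Trail $D^*$ prescribes which of the two edges to repair first; assume it is $wy$, with dashed partner $e''$.

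\emph{The swap.} If $\gamma$ is missing at $y$ in the current partial colouring of $G$, I would simply set $c_G(wy):=\gamma$, which is proper, and re-pair $wy$ with the $\gamma$-edge of $H^*$ at $y$ if there is one (leaving it dashless in $D^*$ otherwise); this ends the case. Otherwise $\gamma$ appears at $y$, and I would uncolour $wy$ and perform the $\{c,\gamma\}$-Kempe exchange on the $\{c,\gamma\}$-alternating component of $w$ in the resulting proper partial colouring of $G$. This component is a path beginning $w,x,\dots$ because, after uncolouring $wy$, the vertex $w$ has the single $\{c,\gamma\}$-edge $wx$; a full-component exchange keeps the colouring proper and turns $wx$ into a $\gamma$-edge, so $c$ is now free at $w$. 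Provided the path does not terminate at $y$, the colour $c$ is still free at $y$ as well (it was carried only by $wy$, which we uncoloured), and I would then set $c_G(wy):=c$ and update the pairing of $wy$. If instead the path terminates at $y$, I would re-route: by Remark~\ref{rem:verticesofH*} and Lemma~\ref{lem:distinct} the vertex at which such an exchange lands is either $s_1$ or the unique $(\Delta-1)$-vertex of $Q(H^*)$ missing the relevant colour, and this information is what lets one pick a suitable $\gamma$, or symmetrically repair $wx$ rather than $wy$, so that the exchange avoids the other offending edge; the dual argument with the roles of $x$ and $y$ interchanged then applies.

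\emph{Correctness and iteration.} After the swap, $w$ no longer sees a repeated colour, the endpoint $y$ (respectively $x$) is proper because the colour newly assigned was free there, and $H^*$ remains a proper $\Delta$-edge-colouring since only Kempe exchanges were performed. Iterating this for every clashing pair, in the order given by the Color Trail (largest class of clashing edges first) and observing that the potential equal to the total number of clashing edges strictly decreases after each swap, yields a proper $\Delta$-edge-colouring of $G$. The hardest part will be the re-routing step: one must guarantee that a $\{c,\gamma\}$-exchange, together with the choice between repairing $wx$ and $wy$ and the choice of $\gamma$, can always be made so that the exchange neither passes through the other clash vertex as an interior vertex (which would leave $c$ occupied there) nor disturbs the partner of the other offending edge. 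This is exactly where the degree profile of $H^*$ (Remark~\ref{rem:verticesofH*}), the distinctness of the missing colours of its $(\Delta-1)$-vertices (Lemma~\ref{lem:distinct}) and the ordering rule of the Color Trail must be combined, with an induction on the number of still-clashing edges incident to $w$.
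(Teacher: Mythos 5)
Your approach is genuinely different from the paper's, and it has a gap exactly where you flag it: the ``re-routing'' case. When the $\{c,\gamma\}$-Kempe path from $w$ terminates at $y$, you claim that Remark~\ref{rem:verticesofH*} and Lemma~\ref{lem:distinct} pin down the landing vertex as $s_1$ or the unique $(\Delta-1)$-vertex of $Q(H^*)$ missing the relevant colour, and that this lets you choose $\gamma$, or switch to repairing $wx$ instead, so as to avoid the bad termination. This does not follow. The endpoints of a $\{c,\gamma\}$-alternating path in the current partial colouring of $G$ are vertices missing $c$ or $\gamma$ \emph{in that partial colouring of $G$}, not in $H^*$: every vertex of $V(G\setminus H)$ misses many colours, and a clique vertex misses the colour of any incident edge of $E(H^*\setminus H)$ whose colour has not been transferred to an edge of $E(G\setminus H)$. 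So there are many candidate endpoints, $y$ among them, and nothing in the two cited statements excludes the possibility that every admissible choice of $\gamma$ (and the symmetric attempt at $x$) yields a chain ending at the other clash vertex. (Also, $x'$ or $y'$ may be $s_1\in S(H^*)$ rather than a clique vertex.) Without closing this case the lemma is not proved, as you yourself acknowledge.

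The paper's proof avoids Kempe chains altogether and is much more local. Writing $e'=xx'$ and $e''=yy'$, it uses that $xx'\in E(H^*)\setminus E(G)$ is coloured $c$, hence $c$ is a \emph{missing} colour of $x'$ in $G$; it then argues that $x'y\in E(G)$ (otherwise the algorithm could have coloured $wx$ and $wy$ from colours free at $x'$ or $y'$ without creating the clash), and simply swaps the colours of $wy$ and $x'y$. The edge $x'y$ may legally take $c$ (free at $x'$, and free at $y$ once it leaves $wy$), $wy$ takes the former colour of $x'y$, and the $c$-conflict at $w$ disappears after one exchange of two edge colours. The possible new conflict at $w$ --- the former colour of $x'y$ may already appear on another edge at $w$ --- is deferred to the iteration analysed in Lemmas~\ref{lem:colorexistence}--\ref{lem:limited-swaps}, which is the same book-keeping your final paragraph gestures at. If you want to salvage the Kempe-chain route you must supply an explicit argument for the terminal-vertex case; as written, the paper's two-edge swap is both simpler and complete precisely on the point where your proof stops.
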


\begin{proof}
     First, note that if color$(wx)_{P(e')}=$ color$(wy)_{P(e'')}$,  then there are two different vertices of $H^*$, say $a,b$,  such that $ax = e'$ and $by = e''$ receive the same color assignment in $H^*$. Moreover, the edges $ay, bx \in E(G)$, otherwise, $wx$ and $wy$ would have been colored using the colors that were incident either to $a$ or either to $b$, and the edge coloring would be proper. Thus, it is possible to swap the colors of the edges $ay$ and $wy$, since the color used at $wy$ is a missing color of vertex $a$. Note that the color conflict at $w$ is solved once both colors were assigned to vertex $a$ by Plantholt's edge coloring.    
     
 \end{proof}

Figure~\ref{fig:color_swap} depicts a color swap.\\ 
 
 \begin{figure}[H]
        \includegraphics[scale=0.55]{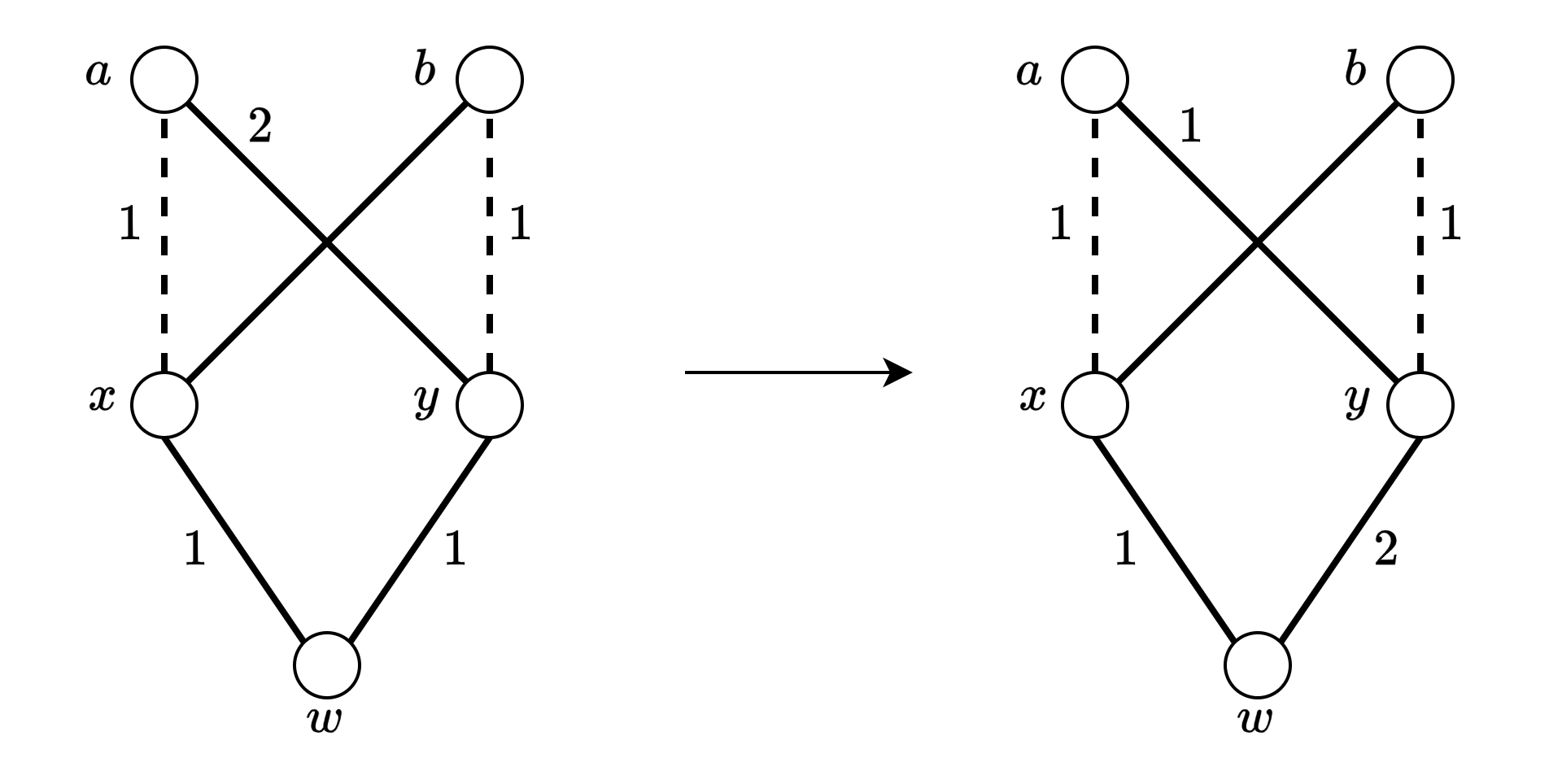}
        \centering
        \caption{On the left there are two conflicting edges: $wx$, $wy$ colored with color $1$. On the right, $wx$ remains colored with color $1$ and $wy$ and $ya$ swapped colors.}
        \label{fig:color_swap}
    \end{figure}

Note that once the colors of two edges are swapped, other conflict of colors can happen. These conflicts are treated by Procedure~\ref{proc:swap}. The idea is that the swaps occur linearly, from the left to the right considering the color trail. And the number of swaps for each edge has an upper bound according to the position of the edge in the Color Trail.

\begin{procedure}[H]

    \caption{Color-Swap($G,H^*\mathcal{L},D^*,i$) \label{proc:swap}}
    \SetAlgoLined
    \KwData{
    An edge coloring of $G$ (not proper), the subgraph $H^*$, a collection $\mathcal{L}$ of the lists of missing colors, the color trail $D^*$ and an index $i$.
    }
            \ForAll{j=i-1, \ldots, 1}{
                \If{$c_G(wx_i) \in L(x_j')$}{
                    $j^* \leftarrow j$
                }
            }
            $c_{\mbox{aux}} \leftarrow c_G(wx_i)$\\
            Remove $c_{\mbox{aux}}$ from $L(x_{j^*}')$\\
            Add $c_G(x_ix_{j^*}')$ to $L(x_{j^*}')$\\
            $c_G(wx_i)\leftarrow c_{H^*}(x_ix_{j^*}')$\\  
            $c_G(x_ix_{j^*}')\leftarrow c_{\mbox{aux}}$\\
            

\end{procedure}

\vspace{3mm}
Figures~\ref{fig:WG1},~\ref{fig:WG2},~\ref{fig:WG3} and~\ref{fig:WG4} show an application of Procedure~\ref{proc:swap} in resolving some color conflicts on a vertex $w$ of degree $5$. Note that in this example, only two colors are used on the edges incident to $w$. One of this colors is used three times and the other color is used two times, thus resulting in four color conflicts.

\begin{figure}[H]
  \centering
  \subfloat[]{\label{fig:WG1_a}\includegraphics[width=.45\textwidth]{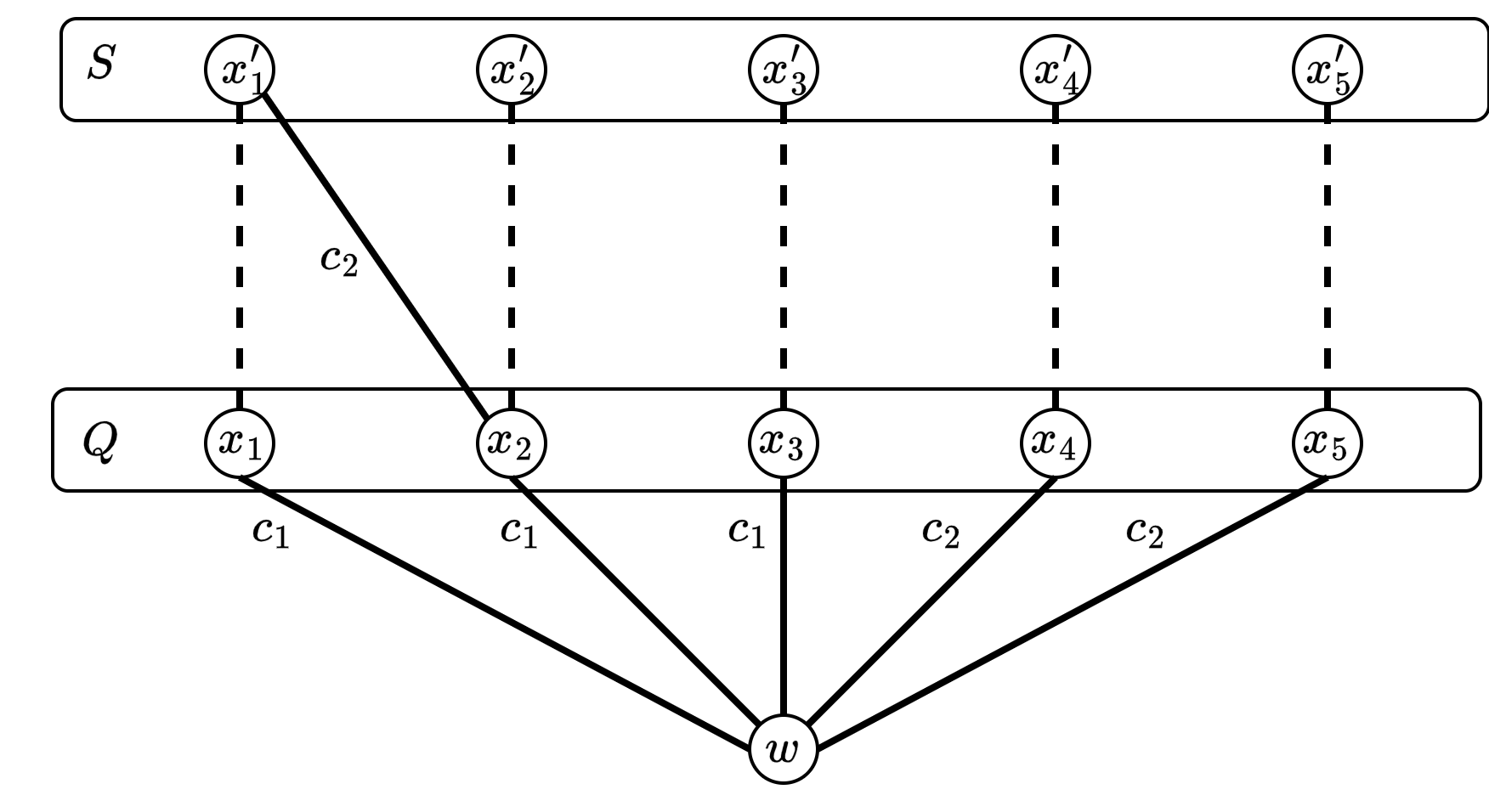}}\hspace{.7em}%
  \subfloat[]{\label{fig:WG1_b}\includegraphics[width=.45\textwidth]{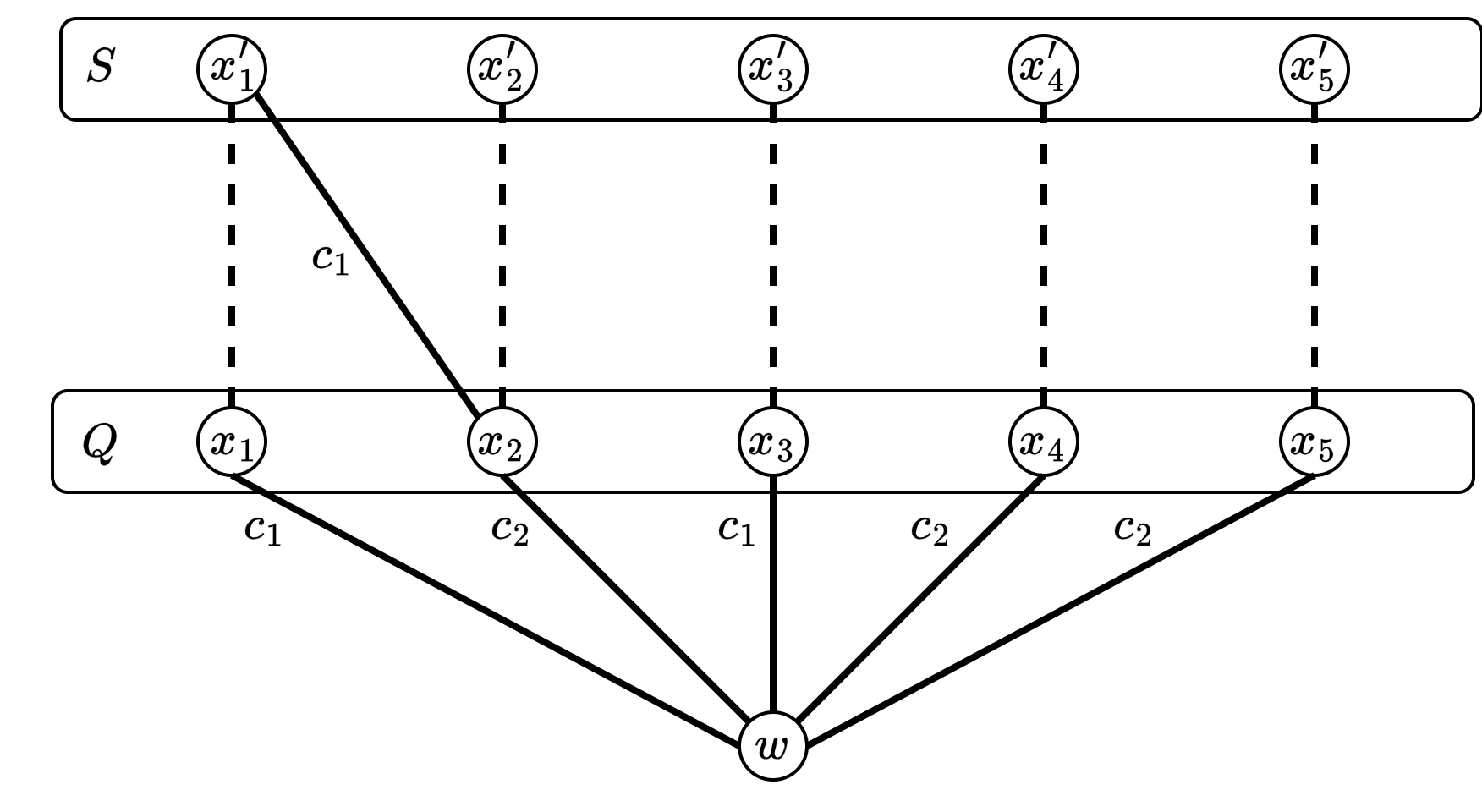}}
  \caption{(a) A Color Trail that shows two color conflicts on vertex $w$. (b) First swap of colors between edges $wx_2$ and $x_2x_1'$.}
  \label{fig:WG1}
\end{figure}

\begin{figure}[H]
        \includegraphics[scale=0.55]{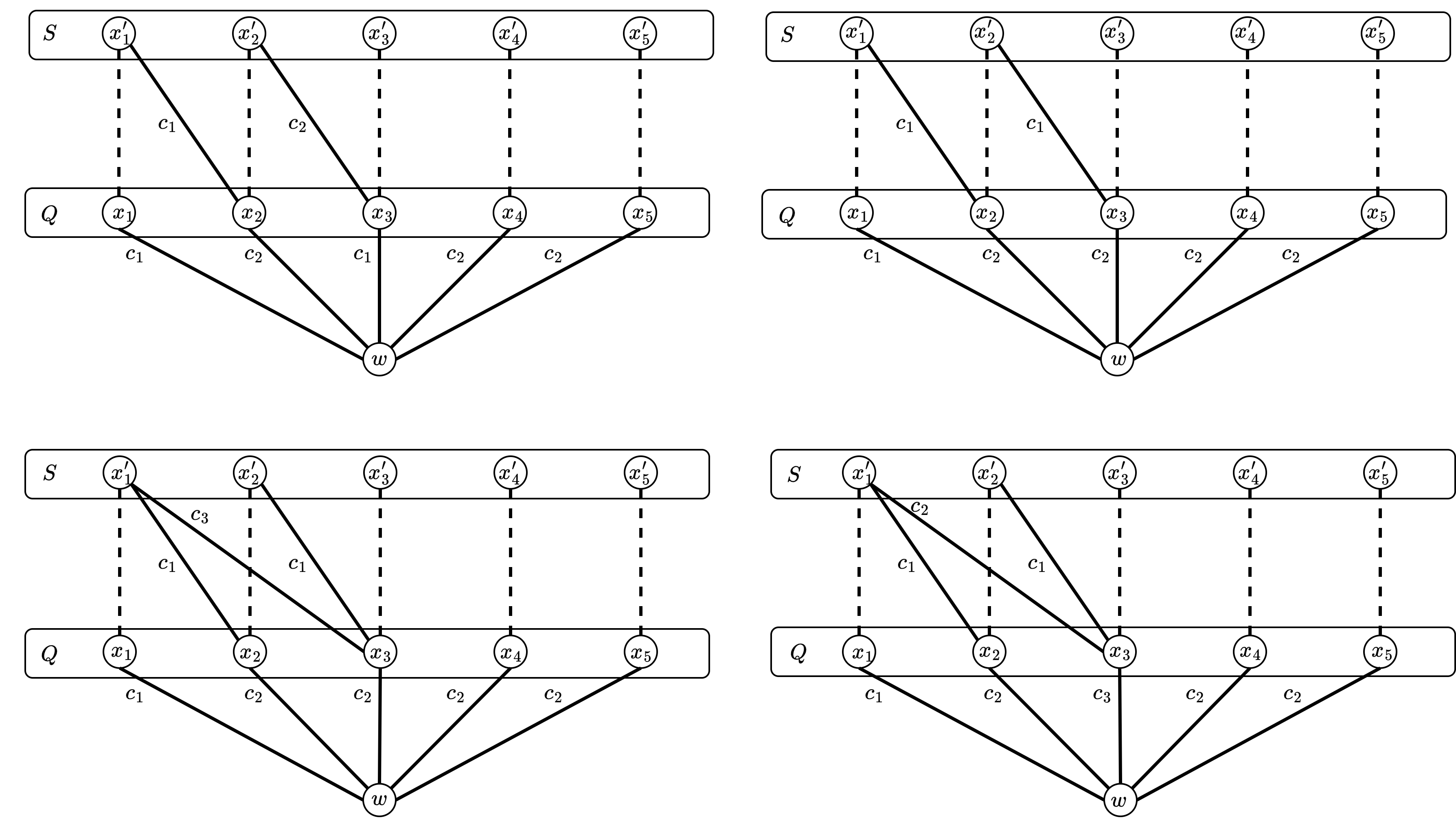}
        \centering
        \caption{The next conflict treated by the algorithm is the one involving the edges $wx_2$ and $wx_3$. Note that, for this second conflict, two swaps are needed.}
        \label{fig:WG2}
    \end{figure}

\begin{figure}[H]
        \includegraphics[scale=0.55]{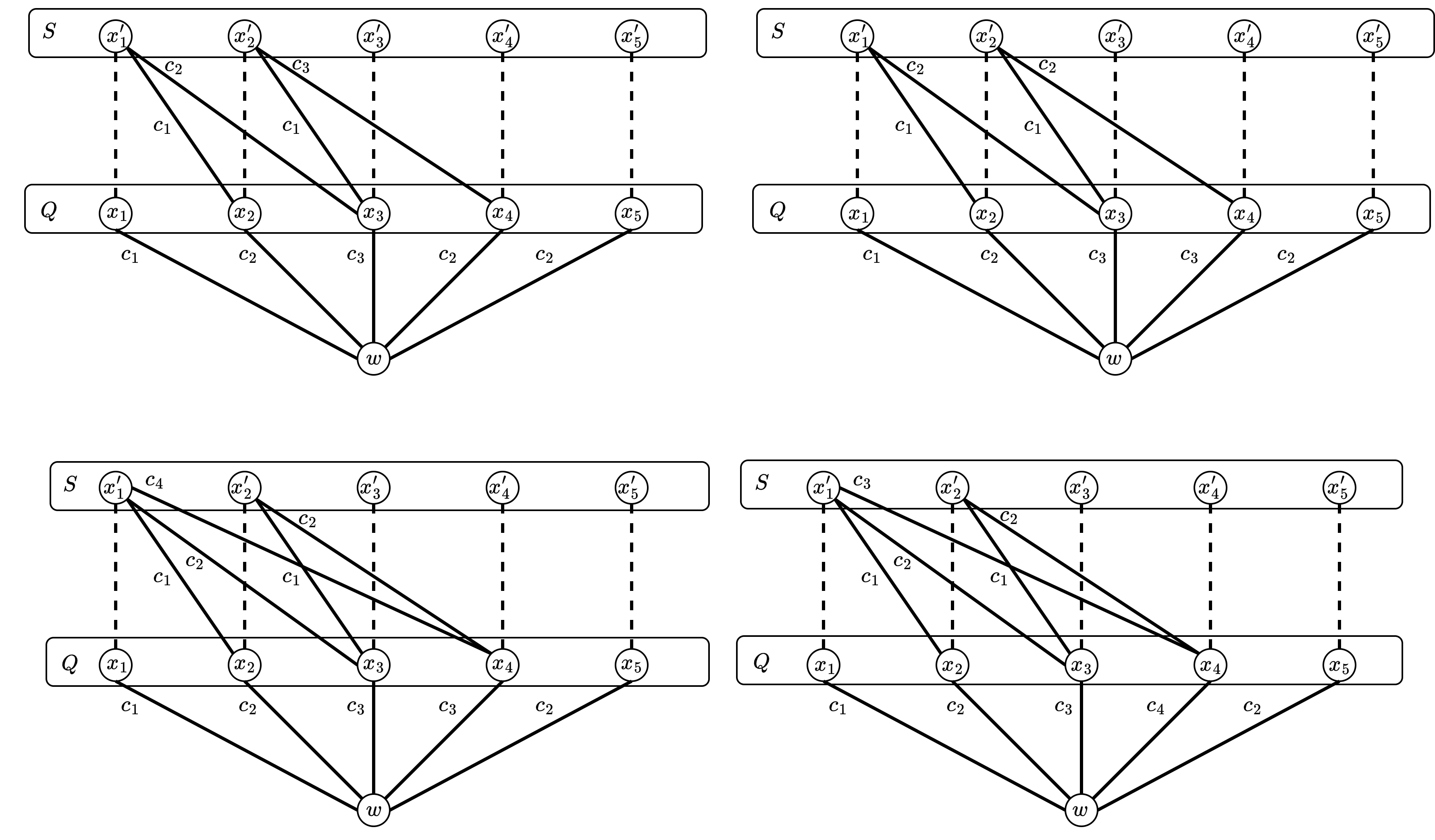}
        \centering
        \caption{The third conflict involves the edges $wx_3$ and $wx_4$. Just like the previous conflict, two swaps are necessary.}
        \label{fig:WG3}
    \end{figure}

\begin{figure}[H]
        \includegraphics[scale=0.55]{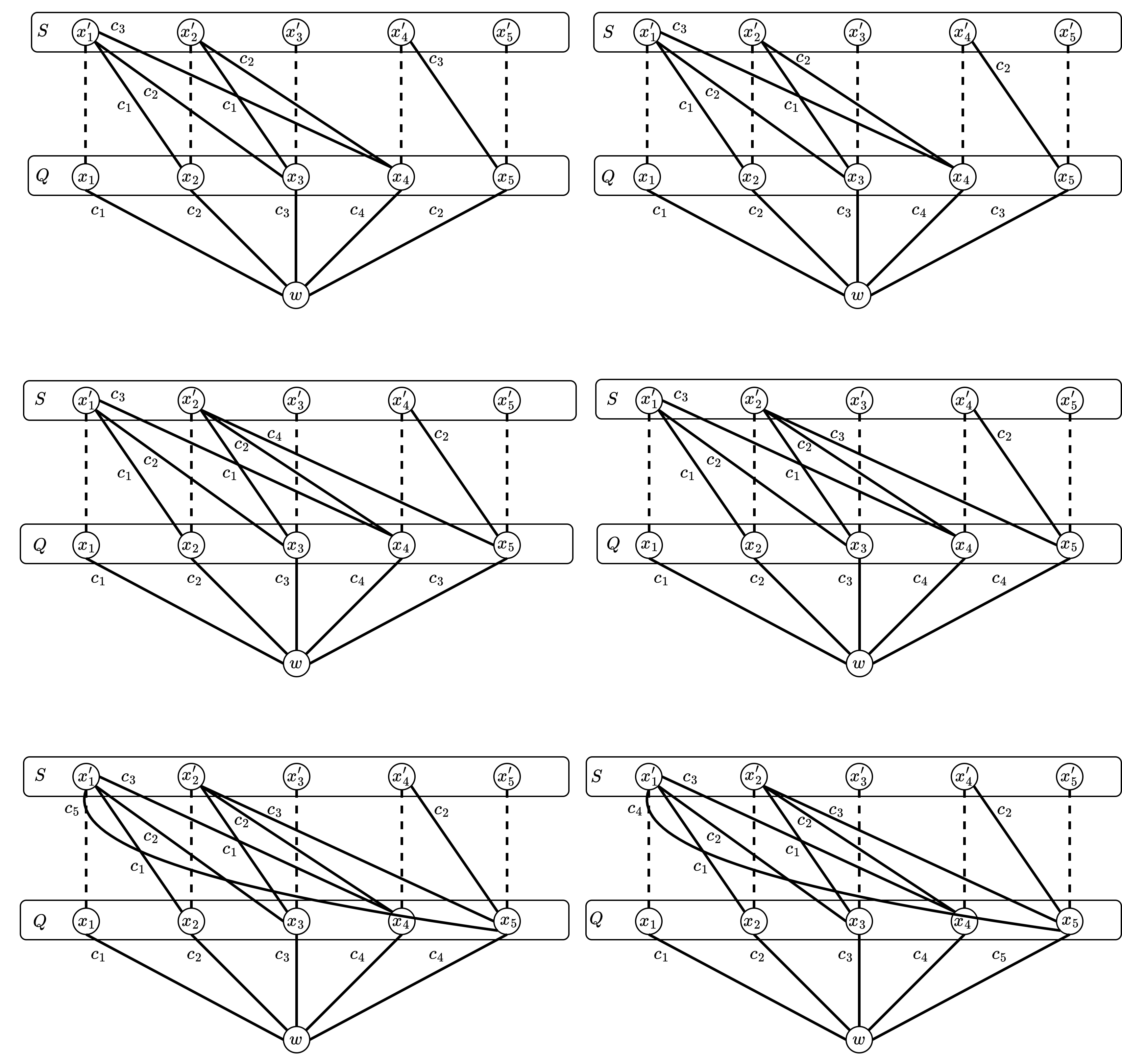}
        \centering
        \caption{The fourth and final color conflict needs three swaps to be resolved.}
        \label{fig:WG4}
    \end{figure}

\begin{lemma}\label{lem:colorexistence}
    If there is a color conflict between edges $wx_i$ and $wx_j$ there is a vertex $x_{j^*}'$ in the independent set of $G$ which has such a color as a missing color, for $1 \leq j^* < i$ in the color trail. 
\end{lemma}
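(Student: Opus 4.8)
The plan is to argue by structural analysis of where the conflicting color $c := c_G(wx_i) = c_G(wx_j)$ can come from, and then to locate a vertex $x_{j^*}'$ in $S(G)$ that has $c$ as a missing color. First I would recall, via Remark~\ref{rem:2possibilities} and the case split in Algorithm~\ref{alg:basic}, that every edge $wx\in E(G\setminus H)$ incident to $w$ gets its color either (a) from the missing color of $x$ when $d_G(x)=\Delta>d_{H^*}(x)$, or (b) by being paired with an edge $xx'\in E(H^*\setminus H)$, in which case $c_G(wx)=c_{H^*}(xx')$. The key observation is that, by the ordering rule used to build the Color Trail $D^*$, among all edges incident to $w$ sharing the color $c$, at most one is of type (a) — it is placed rightmost — and it is exactly the conflicting edge we are trying to recolor (i.e.\ $wx_i$ with $i$ the current index), while $wx_j$ with $j<i$ must be of type (b), paired with some $x_jx_j'\in E(H^*\setminus H)$ with $c_{H^*}(x_jx_j')=c$.

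Next I would show that $c$ is a missing color of $x_j'$ in the current partial coloring of $G$. By Lemma~\ref{lem:distinct}, the missing colors of the $(\Delta-1)$-vertices of $H^*$ are mutually distinct, and each such vertex has exactly one missing color (Remark~\ref{rem:verticesofH*}); moreover every color incident to $s_1$ in $H^*$, and in particular the colors of the edges of $E(H^*\setminus H)$, appears as the missing color of some $(\Delta-1)$-vertex. So there is a unique $(\Delta-1)$-vertex $z$ of $H^*$ whose $H^*$-missing color is $c$. I would then identify $z$ with the endpoint $x_j'$: the edge $x_jx_j'$ lies in $E(H^*\setminus H)$ and is coloured $c$ in $H^*$, so $x_j'$ is not $z$ directly from that edge, but $x_j'$ is itself a $(\Delta-1)$-vertex of $H^*$ (it is a clique vertex not adjacent to $s_1$, hence of degree $\Delta-1$ in $H^*$ by Remark~\ref{rem:verticesofH*}), and — after the pairing step removes $c=c_{H^*}(x_jx_j')$ from $L(w)$ but leaves $x_j'$'s incident edges in $H^*$ untouched, while the single $H^*$-missing colour of $x_j'$ has not yet been consumed by any $E(G\setminus H)$-edge at $x_j'$ at the moment the conflict is detected — the colour $c$ is still absent at $x_j'$. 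Taking $x_{j^*}' := x_j'$ and $j^* := j$ (or, if several type-(b) edges at $w$ carry colour $c$, the one whose paired partner's other endpoint still misses $c$) gives $1\le j^*<i$ as required.

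The main obstacle I anticipate is the bookkeeping in the last step: one must be careful that earlier Color-Swap operations performed while resolving conflicts with smaller index have not already occupied the missing colour $c$ at the candidate vertex $x_j'$. I would handle this by an invariant argument — showing that each swap in Procedure~\ref{proc:swap} removes $c_{\mathrm{aux}}$ from $L(x_{j^*}')$ but simultaneously returns $c_G(x_ix_{j^*}')$ to that list, so the total "supply" of distinct missing colours among the $(\Delta-1)$-vertices feeding $w$ is preserved, and since the number of conflicts at $w$ never exceeds $d_G(w)-1$ while the number of available distinct missing colours (Lemma~\ref{lem:distinct}) is at least that, a valid $x_{j^*}'$ always remains. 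This reduces the statement to a counting inequality between the number of conflicting edges at $w$ and the number of $(\Delta-1)$-vertices of $H^*$ supplying mutually distinct missing colours, which follows from Lemma~\ref{lem:distinct} together with Remark~\ref{rem:2possibilities}.
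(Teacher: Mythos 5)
Your first half is essentially the paper's Case~1: when $wx_j$ still carries the colour it was originally paired with, the pairing edge $x_jx_j'$ lies in $E(H^*\setminus H)$, i.e.\ it is not an edge of $G$, so its Plantholt colour is unused at $x_j'$ in the partial colouring of $G$ and hence sits in $L(x_j')$. You do reach this, but by a needlessly convoluted detour through Lemma~\ref{lem:distinct} and the ``unique $(\Delta-1)$-vertex $z$ missing $c$'' (a detour you yourself have to walk back mid-paragraph); the only fact needed is that $x_jx_j'$ is a fictitious edge, so its colour is free at $x_j'$.

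The genuine gap is in the other case, which the paper treats as Case~2: by the time the conflict between $wx_i$ and $wx_j$ is examined, $wx_j$ may already have been recoloured by an earlier call to Color-Swap, so its current colour $c$ is no longer the colour of its original pairing edge. The paper resolves this by tracking $c$ explicitly: when $wx_j$ was swapped with $x_jx_{j^*}'$, line~6 of Procedure~\ref{proc:swap} simultaneously added that very colour to $L(x_{j^*}')$, so the specific conflicting colour is guaranteed to appear in the missing list of a specific vertex at position $j^*<i$. You notice the relevant mechanism (``each swap \ldots returns $c_G(x_ix_{j^*}')$ to that list'') but then abandon it in favour of a counting argument comparing the number of conflicts at $w$ with the number of distinct missing colours supplied by Lemma~\ref{lem:distinct}. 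That cardinality comparison cannot prove the lemma: the statement requires that the \emph{particular} colour $c=c_G(wx_i)=c_G(wx_j)$ be missing at some $x_{j^*}'$ with $j^*<i$, and knowing only that the total supply of missing colours is large enough does not locate $c$ in any list, nor does it guarantee the witness lies to the left of position $i$. To close the argument you should turn your observation about line~6 into the invariant the paper implicitly uses: at every stage, the current colour of each already-processed edge $wx_j$ ($j<i$) belongs to $L(x')$ for some independent-set vertex $x'$ at a position smaller than $i$ in the Color Trail, a property that holds initially by the pairing and is preserved by each swap.
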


\begin{proof}
Suppose there is a color conflict between edges $wx_i$ and $wx_j, j < i$. There are two cases to analyze. 

Case 1: $wx_j$ did not swap\\
In this case, $x_j$ is paired with $x_j'$ and $x_j'$ is a vertex in the independent set of $G$ with the color of $wx_j$ in its list of missing colors.

Case 2: $wx_j$ swapped at least once\\
Suppose $wx_j$ swapped with $x_j x_j^*, 1 \leq j^* < i$. Note that the color of $x_j x_j^*$ was assigned to $wx_j$ and, according to line $6$ of Procedure~\ref{proc:swap}, this color was added to the list of missing colors of $x_{j^*}'$.

\end{proof}

\begin{lemma}\label{lem:onlyswap}
    If there is a color conflict between edges $wx_i$ and $wx_j$, such that $i>j$, then the only edge incident to $w$ that has its color swapped is $wx_i$.
\end{lemma}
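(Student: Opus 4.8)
The plan is to analyze the single invocation of the inner double loop of Algorithm~\ref{alg:basic} that processes the conflict between $wx_i$ and $wx_j$, and to show that each application of Procedure~\ref{proc:swap} touches the color of $wx_i$ only, never the color of any $wx_k$ with $k<i$ already present in the Color Trail. First I would recall precisely what Procedure~\ref{proc:swap}$(G,H^*,\mathcal{L},D^*,i)$ does: it locates the largest index $j^* < i$ such that $c_G(wx_i) \in L(x_{j^*}')$ (such a $j^*$ exists by Lemma~\ref{lem:colorexistence}), and then it swaps the color of $wx_i$ with the color $c_{H^*}(x_i x_{j^*}')$ of the edge $x_i x_{j^*}' \in E(H^*\setminus H)$, updating $L(x_{j^*}')$ accordingly. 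The only two edges whose colors change are $wx_i$ and $x_i x_{j^*}'$; the edge $x_i x_{j^*}'$ is in $E(H^*\setminus H)$, hence \emph{not} of the form $wx_k$, and in particular is not incident to $w$. So a single call to the procedure visibly changes the color of exactly one edge incident to $w$, namely $wx_i$.

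Next I would argue that the loop structure guarantees the conflict is resolved without ever re-entering the procedure with an index smaller than $i$. By the ordering rule of the Color Trail and the way the outer loop over $i=2,\dots,d(w)$ proceeds, when we reach index $i$ the edges $wx_1,\dots,wx_{i-1}$ have already been made pairwise properly colored; the current conflict is therefore between $wx_i$ and some earlier $wx_j$. Each call $\texttt{Color-Swap}(\cdot,i)$ replaces $c_G(wx_i)$ by the color $c_{H^*}(x_i x_{j^*}')$ that Plantholt's coloring assigned at the \emph{$H^*$-vertex} $x_i$; since $H^*$ is properly edge-colored, this color differs from the colors of all other $H^*$-edges at $x_i$, and by the choice of $j^*$ as the largest such index it also escapes collision with the colors currently sitting on $wx_{j^*+1},\dots,wx_{i-1}$. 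If a fresh collision with some $wx_k$, $k\le j^*$, is created, the inner $\texttt{for }k\dots\texttt{for }j$ loop detects it and calls $\texttt{Color-Swap}(\cdot,i)$ again — still with index $i$. Thus every swap triggered while treating the $(wx_i,wx_j)$ conflict is a swap of the color of $wx_i$; the colors of $wx_1,\dots,wx_{i-1}$ are never overwritten, which is exactly the claim.

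The key point to nail down carefully — and the step I expect to be the main obstacle — is that the sequence of repeated $\texttt{Color-Swap}(\cdot,i)$ calls terminates, i.e., that we cannot cycle. Here I would use the Color Trail ordering: each swap moves the ``attachment point'' of $wx_i$ to a strictly smaller index $j^*$ (because after a swap the freshly placed color $c_{H^*}(x_i x_{j^*}')$ can only conflict with an edge $wx_k$ having $k\le j^*$, and the next $j^*$ chosen is strictly below the previous one), so the index is a strictly decreasing nonnegative integer and the process stops after at most $i-1$ swaps — consistent with the position-dependent bound mentioned before the procedure and visible in Figures~\ref{fig:WG1}--\ref{fig:WG4}. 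Throughout, the availability of the needed missing color at each new $x_{j^*}'$ is guaranteed by Lemma~\ref{lem:colorexistence} and Lemma~\ref{lem:distinct}, and the fact that only $wx_i$ and an $E(H^*\setminus H)$-edge change color in each step is immediate from lines~7--8 of Procedure~\ref{proc:swap}. Assembling these observations gives the statement.
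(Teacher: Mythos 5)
Your core argument is the same as the paper's: a single call to Procedure~\ref{proc:swap} recolors only $wx_i$ and $x_i x_{j^*}'$, and the latter lies in $E(H^*\setminus H)$ (so $x_{j^*}'\in V(H)$ and the edge is not incident to $w$), hence $wx_i$ is the only edge at $w$ whose color changes; the additional material on termination and loop structure is not needed for this lemma. One small inaccuracy: the loop in Procedure~\ref{proc:swap} runs $j=i-1,\dots,1$ and overwrites $j^*$ each time the test succeeds, so the final $j^*$ is the \emph{leftmost} (smallest) qualifying index, not the largest as you state — though this does not affect the validity of the claim.
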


\begin{proof}
Note that by Procedure~\ref{proc:swap}, the swap of colors occurs between two edges: $wx_i$ and $x_ix_{j^*}'$ where $j^*$ is the position in the color trail of the leftmost vertex of the independent set of $G$ which has the conflicting color in its list of missing colors.
Therefore, the only edge incident to $w$ that has its color changed is $wx_i$, i.e., none of the edges $wx_l, 1\leq l \leq i-1$ have its color changed.

\end{proof}

\begin{lemma}\label{lem:limited-swaps}
    Let $w \in V(G \setminus H)$. For each edge incident to $w$ at position $i$ from the left to the right in the color trail, $1\leq i \leq d(w)$, at most $i-1$ swaps are needed.
\end{lemma}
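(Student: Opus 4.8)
The plan is to prove the statement by induction on the position $i$ of the edge $wx_i$ in the color trail $D^*$, establishing simultaneously the auxiliary invariant that, once Algorithm~\ref{alg:basic} finishes processing $wx_i$, the edges $wx_1,\dots,wx_i$ are pairwise properly colored and no edge $wx_\ell$ with $\ell<i$ has had its color altered while $wx_i$ was being processed. For the base case $i=1$, the loop of Algorithm~\ref{alg:basic} that invokes Procedure~\ref{proc:swap} starts at $i=2$, so the leftmost edge of the color trail is never swapped; this uses $0=i-1$ swaps, and by the ordering rule of the color trail this is an edge whose color has the largest number of conflicting partners, consistently with the construction.

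For the inductive step, assume the claim holds for all positions smaller than $i$, so that $wx_1,\dots,wx_{i-1}$ are already pairwise properly colored and, in particular, carry $i-1$ mutually distinct colors. When the algorithm processes $wx_i$, each swap is triggered by an equality $c_G(wx_i)=c_G(wx_j)$ for some $j<i$; by Lemma~\ref{lem:colorexistence} there is a vertex $x_{j^*}'$ of $S(G)$, with $1\le j^*<i$, having this color in its missing list, so Procedure~\ref{proc:swap} is applicable, and by Lemma~\ref{lem:onlyswap} the only edge incident to $w$ whose color changes while $wx_i$ is being processed is $wx_i$ itself, which secures the second part of the invariant. It remains to bound the number of swaps by $i-1$.

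The key claim is that no color can trigger a swap of $wx_i$ twice. When a swap is performed because $c_G(wx_i)=c$, lines $5$--$8$ of Procedure~\ref{proc:swap} move the color $c$ onto the edge $x_ix_{j^*}'$ and remove $c$ from $L(x_{j^*}')$, while $wx_i$ receives the color $c_{H^*}(x_ix_{j^*}')$; since $H^*$ is properly edge colored, $x_i$ is incident in $H^*$ to at most one edge of each color, so the only way for $c$ to reappear on $wx_i$ in a later swap is through that same edge $x_ix_{j^*}'$, and a short case analysis on the updated missing lists shows this cannot occur. Hence the colors that successively occupy $wx_i$ and cause swaps are distinct, and since each of them is one of the $i-1$ distinct colors of $wx_1,\dots,wx_{i-1}$, at most $i-1$ swaps take place, which also matches the bound hard-coded in the loop of Algorithm~\ref{alg:basic}.

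I expect the main obstacle to be exactly this key claim that each conflicting color triggers at most one swap of $wx_i$: one must argue carefully, using the leftmost choice of $j^*$ in Procedure~\ref{proc:swap} and the bookkeeping of the missing lists, that the sequence of colors assigned to $wx_i$ does not cycle. An equivalent, and possibly cleaner, formulation is that the indices $j^*$ used in consecutive swaps of $wx_i$ form a strictly increasing sequence of positions in the color trail, so the process halts after at most $i-1$ steps; verifying this monotonicity against the precise nesting of the loops in Algorithm~\ref{alg:basic} is the delicate part, and once it is in place the count $i-1$ follows immediately.
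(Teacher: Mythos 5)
Your proof follows essentially the same counting idea as the paper's: the edges $wx_1,\dots,wx_{i-1}$ to the left of $wx_i$ carry at most $i-1$ distinct colors, every conflict involving $wx_i$ is with one of these colors, and by Lemma~\ref{lem:onlyswap} only $wx_i$ itself ever changes color, so one bounds the number of swaps by $i-1$. The inductive packaging and the explicit invariant are yours, but they do not change the substance; the paper's own proof is in fact terser, saying only that there are $i-1$ edges to the left of $wx_i$ and hence at most $i-1$ swaps.

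The difficulty is that, as you yourself flag, the whole argument hinges on the claim that no color can trigger a swap of $wx_i$ twice --- equivalently, that the sequence of colors successively assigned to $wx_i$ (or the sequence of indices $j^*$) does not cycle. You assert that ``a short case analysis on the updated missing lists shows this cannot occur,'' but you never carry out that analysis, and it is not obviously short: after a swap caused by the color $c$, the procedure moves $c$ onto the edge $x_ix_{j^*}'$, deletes $c$ from $L(x_{j^*}')$, and gives $wx_i$ the color $c_{H^*}(x_ix_{j^*}')$; a later swap assigns $wx_i$ the color $c_{H^*}(x_ix_{k}')$ for some other $k$, and nothing in your argument excludes $c_{H^*}(x_ix_{k}')=c$, which would reinstate the original conflict. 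Your alternative formulation (that the indices $j^*$ are strictly increasing) is likewise stated as a hoped-for monotonicity rather than proved. Until one of these is established, the bound of $i-1$ is not justified, so the proposal as written has a genuine gap at its declared key step --- although, to be fair, the paper's own proof silently assumes exactly the same fact, deducing ``at most $i-1$ swaps'' from ``$i-1$ edges on the left'' without arguing that each left edge (or each color) can be responsible for at most one swap.
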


\begin{proof}

A color conflict consists of two edges incident to the vertex $w$: the one in the $i$-th position of the color trail and another one on its left.  Once a swap is made, other color conflicts might happen. If the conflict is with a color assigned to an edge on the left of $wx_i$, then $wx_i$ is swapped. Since there are $i-1$ edges on the left of $wx_i$, at most $i-1$ swaps are made. Otherwise, by Lemma~\ref{lem:onlyswap}, the color of $wx_i$ does not change.

\end{proof} 

Corollary~\ref{cor:numberofswaps} follows from Lemma~\ref{lem:limited-swaps}.

\begin{corollary}\label{cor:numberofswaps}
For each $w \in V(G \setminus H)$ the number of swaps is $O(d^2(w))$.
\end{corollary}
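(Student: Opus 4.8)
The plan is to bound the total number of color swaps performed for a fixed vertex $w \in V(G \setminus H)$ by summing, over all edges incident to $w$, the per-edge swap bound established in Lemma~\ref{lem:limited-swaps}. First I would recall that Lemma~\ref{lem:limited-swaps} guarantees that the edge $wx_i$ in position $i$ of the color trail (for $1 \le i \le d(w)$) undergoes at most $i-1$ swaps during the resolution of conflicts. By Lemma~\ref{lem:onlyswap}, when the conflict at position $i$ is processed, only $wx_i$ has its color changed among the edges incident to $w$; consequently the $i-1$ bound is not amortized away or charged to edges in earlier positions, so the counts for distinct positions are genuinely additive.

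Summing over positions, the total number of swaps performed at $w$ is at most $\sum_{i=1}^{d(w)} (i-1) = \binom{d(w)}{2} = \frac{d(w)(d(w)-1)}{2}$, which is $O(d^2(w))$. This is essentially the whole argument: the corollary is a direct arithmetic consequence of Lemma~\ref{lem:limited-swaps} together with the observation that each swap is attributed to exactly one trail position, so there is no double counting. I would also note that each individual swap, as described in Procedure~\ref{proc:swap}, touches a bounded number of edges and lists and therefore costs $O(1)$ time beyond locating the leftmost eligible vertex $x_{j^*}'$, which keeps the per-vertex processing polynomial, but strictly speaking the corollary only concerns the swap count.

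The only subtle point — and the place I would be most careful — is to make sure that the per-position bounds from Lemma~\ref{lem:limited-swaps} really can be added without interference: one must confirm that resolving the conflict at position $i$ does not trigger further swaps at some position $i' > i$ that would then be counted again when the outer loop reaches $i'$. Here Lemma~\ref{lem:onlyswap} is exactly what is needed, since it says the swap at step $i$ changes only $c_G(wx_i)$, leaving $c_G(wx_{i'})$ for $i' > i$ untouched; hence the outer loop processes each position with a clean slate to the right, and the worst case for position $i$ is independently at most $i-1$. With this in hand the summation is immediate and the corollary follows.
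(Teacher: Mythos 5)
Your proposal is correct and matches the paper's intent exactly: the paper gives no explicit proof, simply stating that the corollary follows from Lemma~\ref{lem:limited-swaps}, and the intended argument is precisely your summation $\sum_{i=1}^{d(w)}(i-1)=O(d^2(w))$. Your additional care about non-interference via Lemma~\ref{lem:onlyswap} is a reasonable elaboration of the same route, not a different one.
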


\begin{lemma}\label{lem:corcolorswap}
    Procedure Color-Swap is correct and runs in polynomial time.
\end{lemma}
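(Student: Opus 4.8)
The plan is to establish the two claims of the lemma separately: correctness of Procedure~\ref{proc:swap} (that a single invocation performs a legitimate swap leaving a proper coloring on the already-treated edges) and a polynomial bound on its running time. For correctness, I would first invoke Lemma~\ref{lem:colorexistence} to guarantee that whenever Color-Swap is called because of a conflict between $wx_i$ and $wx_j$ with $j<i$, there genuinely exists a vertex $x_{j^*}'$ in $S(G)$ holding the conflicting color $c_G(wx_i)$ in its list of missing colors, with $1\le j^*<i$ in the color trail; hence the loop in lines~1--3 of Procedure~\ref{proc:swap} terminates with a well-defined $j^*$. Then I would verify, line by line, that the bookkeeping in lines~4--9 is consistent: the color $c_{\text{aux}}=c_G(wx_i)$ is removed from $L(x_{j^*}')$ exactly when it is assigned to the edge $x_ix_{j^*}'$ (so $x_{j^*}'$ no longer ``misses'' it), the old color $c_G(x_ix_{j^*}')$ is returned to $L(x_{j^*}')$ (it is now free at $x_{j^*}'$), and $wx_i$ receives $c_{H^*}(x_ix_{j^*}')$, which by Remark~\ref{rem:difdelta} / the structure of $H^*$ is a color that was missing at $x_i$ before the swap. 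The key point to check is that this swap resolves the targeted conflict — the color $c_{\text{aux}}$ is no longer incident to $w$ twice — and that it does not create a conflict at $x_i$ or at $x_{j^*}'$: at $x_i$ because $c_{H^*}(x_ix_{j^*}')$ was a missing color there, and at $x_{j^*}'$ because $c_{\text{aux}}$ was in $L(x_{j^*}')$ by choice of $j^*$.

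For the running time, I would bound the cost of a single invocation of Color-Swap: the loop over $j=i-1,\dots,1$ scanning the lists $L(x_j')$ costs $O(d(w)\cdot\Delta)$, and the constant number of list updates and color reassignments in lines~4--9 cost $O(\Delta)$ each, so one call runs in $O(d(w)\Delta)$ time, which is polynomial in $|V(G)|$. Combined with Corollary~\ref{cor:numberofswaps}, which bounds the number of Color-Swap calls per vertex $w$ by $O(d^2(w))$, the total work attributable to $w$ is $O(d^3(w)\Delta)$, and summing over all $w\in V(G\setminus H)$ gives a polynomial bound overall; this establishes the ``runs in polynomial time'' part.

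I expect the main obstacle to be the correctness argument, specifically the claim that a swap never introduces a \emph{new} conflict that the outer procedure has not already accounted for. Procedure~\ref{proc:swap} only reassigns the single edge $wx_i$ among the edges incident to $w$ (this is exactly Lemma~\ref{lem:onlyswap}), so edges $wx_1,\dots,wx_{i-1}$ are untouched and any residual conflict must again involve $wx_i$ with an edge to its left; this is what feeds the iteration bound $i-1$ of Lemma~\ref{lem:limited-swaps}. The subtle direction is to argue that the swap cannot propagate a conflict \emph{outward} — i.e. cannot damage the already-proper coloring of the edges $wx_1,\dots,wx_{i-1}$ or of the subgraph $H^*$ — and that the only edge of $E(H^*\setminus H)$ whose color changes is $x_ix_{j^*}'$, whose new color $c_{\text{aux}}$ is missing at $x_{j^*}'$ by construction. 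Once this localization is in place, correctness follows by combining Lemmas~\ref{lem:colorexistence},~\ref{lem:onlyswap} and~\ref{lem:limited-swaps}, and the polynomial-time claim follows from the per-call and per-vertex bounds above.
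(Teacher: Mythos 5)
Your proposal is correct and follows essentially the same route as the paper, whose proof simply aggregates Lemmas~\ref{lem:troca},~\ref{lem:colorexistence},~\ref{lem:onlyswap} and~\ref{lem:limited-swaps} for correctness and asserts an $O(n^2)$ bound on the running time; your version supplies the line-by-line bookkeeping and the per-call cost analysis that the paper leaves implicit. The only nitpick is your claim that $c_{H^*}(x_ix_{j^*}')$ was \emph{missing} at $x_i$ before the swap — it was in fact present there on the edge $x_ix_{j^*}'$; the absence of a new conflict at $x_i$ follows instead from the swap preserving the set of colors incident to $x_i$, which is the content of Lemma~\ref{lem:troca} that the paper cites and you re-derive inline.
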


\begin{proof}
    The correctness of Procedure~\ref{proc:swap} follows from Lemmas~\ref{lem:troca},~\ref{lem:colorexistence}, \ref{lem:onlyswap} and \ref{lem:limited-swaps}. Moreover, it is easy to see that the procedure runs in $O(n^2)$-time.

\end{proof}

\begin{lemma}\label{lem:coralg1}
    The Main Algorithm (Algorithm~\ref{alg:basic})  is correct and runs in polynomial time.
\end{lemma}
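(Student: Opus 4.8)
The plan is to establish the two claims separately: that Algorithm~\ref{alg:basic} outputs a proper $\Delta$-edge coloring of $G$, and that it runs in polynomial time. Recall that, by the hypotheses of Section~\ref{sec:results}, $G$ is a $(\sigma=3)$-split graph without pendant vertices, with even $\Delta(G)$, possessing a vertex $s_1 \in S(G)$ adjacent to a $\Delta$-vertex and with $d(s_1) \le \frac{|V(G)|-1}{2}$. By Remark~\ref{remark:subgraph} I would fix an induced $(\sigma=2)$-split subgraph $H$ with $\Delta(H)=\Delta(G)$ containing such an $s_1$; by Theorem~\ref{thm:caracterizacao_split2adm}, $H$ has a universal vertex, and by Lemma~\ref{lem:satG} the saturated graph $H^*$ of Algorithm~\ref{alg:construction} exists and, by Plantholt's method (Theorem~\ref{thm:plantholt}), admits a proper $\Delta$-edge coloring. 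This settles the initialization phase: the edges of $H$ are properly colored with $\Delta$ colors, and every vertex $v$ receives a list $L(v)$ of missing colors determined by the edges of $E(H^*\setminus H)$, with full lists of size $\Delta$ for the vertices in $V(G\setminus H)$.

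Next I would verify that the first \texttt{ForAll} loop over $E(G\setminus H)$ produces a (possibly improper) coloring that uses only the $\Delta$ colors and whose only conflicts are at vertices of $V(G\setminus H)$. That loop only colors edges $wx$ with $w\in V(G\setminus H)$ and $x\in Q(G)$. By Remark~\ref{rem:2possibilities}, either $d_G(x)=d_{H^*}(x)+1$, in which case Lemma~\ref{lem:missing-color} gives a unique available color for $wx$ and no conflict is created at $x$; or $d_G(x)\le d_{H^*}(x)$, in which case the color of some paired edge $xx'\in E(H^*\setminus H)$ is transplanted onto $wx$, and since that color was missing at $x$ after Plantholt's coloring, again no conflict appears at $x$. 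Hence the coloring of $H^*$ (and of $H$) is never disturbed, and repeated colors can only occur at the vertices $w\in V(G\setminus H)$; Remark~\ref{rem:difdelta} guarantees that an assignment is always available. I would also note here that Lemma~\ref{lem:distinct} — the missing colors of the $(\Delta-1)$-vertices of $H^*$ are mutually distinct — is exactly what keeps the later swaps feasible.

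Then I would treat the conflict-resolution loop over $w\in V(G\setminus H)$. For a fixed $w$, the Color Trail $D^*$ orders the edges $wx_1,\dots,wx_{d(w)}$ by decreasing multiplicity of their colors, breaking ties so that the dash-free edge of each color is rightmost, and Procedure~\ref{proc:swap} is invoked from left to right. By Lemma~\ref{lem:colorexistence} each conflicting color at $wx_i$ is a missing color of some independent-set vertex $x_{j^*}'$ with $j^*<i$, so the swap of Lemma~\ref{lem:troca} is always executable; by Lemma~\ref{lem:onlyswap} each such swap changes, among the edges at $w$, only the color of $wx_i$ (and of the clique edge $x_ix_{j^*}'$, which is not incident to $w$); and by Lemma~\ref{lem:limited-swaps} with Corollary~\ref{cor:numberofswaps} the processing of $w$ terminates after $O(d^2(w))$ swaps with all $d(w)$ edges at $w$ colored distinctly. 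The step I expect to be the main obstacle is showing that this per-vertex procedure is \emph{globally} consistent: the swaps performed while processing $w$ must not create a conflict at a previously processed $w'\in V(G\setminus H)$ nor at any vertex of $Q(G)$, and must leave the edges of $H^*$ correctly colored. I would argue this via a loop invariant maintained over the iterations — ``the current coloring is proper on $G[V(H^*)]$ together with all edges incident to already-processed vertices of $V(G\setminus H)$, and each list $L(q)$, $q\in Q(G)$, records precisely the colors still free at $q$'' — using that the only edges a swap recolors are $wx_i$ and a clique edge $x_ix_{j^*}'\in E(H^*\setminus H)$ whose new color was missing at $x_{j^*}'$ (and the lists are updated accordingly by Procedure~\ref{proc:swap}), together with the fact that $S(G)$ is independent, so $w$ and any $w'$ are non-adjacent and share no edge.

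Finally, for the running time: Algorithm~\ref{alg:construction} and Plantholt's method are polynomial; the first loop touches each edge of $E(G\setminus H)$ once with $O(n)$ work per edge; constructing each Color Trail costs $O(n\log n)$; by Corollary~\ref{cor:numberofswaps} the total number of swaps over all $w$ is $\sum_w O(d^2(w)) = O(n^3)$, and by Lemma~\ref{lem:corcolorswap} each \texttt{Color-Swap} call runs in $O(n^2)$. Hence Algorithm~\ref{alg:basic} runs in polynomial time, which together with the correctness argument above completes the proof.
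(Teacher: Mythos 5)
Your proposal is correct and follows essentially the same route as the paper: the paper's proof is a one-line appeal to Remark~\ref{rem:2possibilities} and Lemmas~\ref{lem:satG}, \ref{lem:missing-color}, \ref{lem:distinct} and~\ref{lem:corcolorswap}, which are exactly the ingredients you assemble. Your version is simply more explicit — notably in spelling out the loop invariant guaranteeing that swaps performed at one $w$ do not disturb previously processed vertices or the coloring of $H^*$, a point the paper leaves implicit — and your $O(n^5)$-flavored time bound is compatible with (indeed tighter than) the paper's stated $O(n^9)$.
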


\begin{proof}
    The correctness of Algorithm~\ref{alg:basic} follows from Remark~\ref{rem:2possibilities} and Lemmas~\ref{lem:satG},~\ref{lem:missing-color}, ~\ref{lem:distinct} and~\ref{lem:corcolorswap}. Moreover, it runs in $O(n^9)$-time.

\end{proof}

\begin{theorem}\label{thm:principal}
    Let $G$ be a $(\sigma=3)$-split graph. If there is a vertex $v \in S(G)$ adjacent to a $\Delta$-vertex and $\displaystyle{d(v)\leq\frac{|V(G)|-1}{2}}$, then $G$ is Class~1.
\end{theorem}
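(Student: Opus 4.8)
The plan is to show that, under the stated hypotheses, the Main Algorithm (Algorithm~\ref{alg:basic}) is applicable to $G$ and outputs a proper $\Delta(G)$-edge coloring; the theorem is then immediate from Lemma~\ref{lem:coralg1}, so the work reduces to verifying the algorithm's preconditions. I would begin with the reductions already in place: by Observation~\ref{obs:pendant_vertex} we may delete all pendant vertices and color their edges last, so assume $G$ has none; since $\sigma(G)=3$, Theorem~\ref{thm:caracterizacao_split2adm} says $G$ has no universal vertex; and if $\Delta(G)$ were odd then $G$ would be Class~1 by Theorem~\ref{thm:Chen}, so assume $\Delta:=\Delta(G)$ is even.

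Next I would construct the $(\sigma=2)$-subgraph the algorithm consumes. Let $q^{*}$ be a $\Delta$-vertex adjacent to $v$ (one exists by hypothesis) and set $H:=G[N[q^{*}]]$. By Remark~\ref{remark:subgraph}, $H$ is a $(\sigma=2)$-split graph with $\Delta(H)=\Delta$, it has the universal vertex $q^{*}$, and $|V(H)|=\Delta+1$, which is odd. Since $S(G)$ is independent, every neighbor of $v$ lies in $Q(G)\subseteq V(H)$, so $v\in S(H)$ and $d_{H}(v)=d_{G}(v)$; together with the hypothesis $d(v)\le\frac{|V(G)|-1}{2}$, this is what is needed to bring $H$ within the scope of Lemma~\ref{lem:satG}. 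One also checks that $H$ is not overfull — this has to hold under our hypotheses, since an overfull subgraph of $G$ with maximum degree $\Delta(G)$ would already make $G$ Class~2 — so Lemma~\ref{lem:satG} produces the saturated graph $H^{*}$ of Algorithm~\ref{alg:construction}, and Plantholt's result (Theorem~\ref{thm:plantholt}) gives $\chi'(H^{*})=\Delta$; fix such a coloring of $H^{*}$.

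Feeding $H$, $H^{*}$ and this coloring into Algorithm~\ref{alg:basic}, the first loop assigns a color to every edge of $E(G\setminus H)$: the unique forced missing color when $d_{G}(x)=\Delta=d_{H^{*}}(x)+1$ (Lemma~\ref{lem:missing-color}), or a color inherited from a paired edge of $E(H^{*}\setminus H)$ otherwise (Remarks~\ref{rem:2possibilities} and~\ref{rem:difdelta}), with Lemma~\ref{lem:distinct} guaranteeing the relevant missing colors are mutually distinct so that no edge is left uncolored. The second loop traverses the Color Trail of each $w\in V(G\setminus H)$ and repairs the conflicts introduced by this extension via the swaps of Procedure~\ref{proc:swap}; Lemmas~\ref{lem:troca},~\ref{lem:colorexistence},~\ref{lem:onlyswap} and~\ref{lem:limited-swaps} make each swap legitimate and bound by $i-1$ the number of swaps needed for the edge in position $i$, so by Corollary~\ref{cor:numberofswaps} and Lemma~\ref{lem:corcolorswap} the loop terminates. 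By Lemma~\ref{lem:coralg1} the output is a proper $\Delta$-edge coloring of $G$; hence $\chi'(G)=\Delta(G)$ and $G$ is Class~1.

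The main obstacle — already isolated in the lemmas above — is the conflict-resolution phase: one must argue that processing the Color Trail from left to right never reopens a conflict that has already been settled, so that the bounded sequence of swaps in Lemma~\ref{lem:limited-swaps} genuinely terminates in a proper coloring. The combinatorial fact that makes this work is the one behind Lemmas~\ref{lem:troca} and~\ref{lem:colorexistence}, namely that every conflicting color reappears as a missing color at a vertex of $S(G)$ situated to the left in the trail, with those missing colors kept distinct by Lemma~\ref{lem:distinct}. A secondary, more routine matter is checking that a pendant vertex which happens to appear in $H$ but not in $G$ is harmless both for the saturation step of Lemma~\ref{lem:satG} and for the subsequent extension.
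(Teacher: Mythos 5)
Your proposal is correct and follows essentially the same route as the paper: identify the $(\sigma=2)$-subgraph $H$ induced by the closed neighborhood of a $\Delta$-vertex adjacent to $v$ (Remark~\ref{remark:subgraph}), build the saturated graph $H^*$ via Algorithm~\ref{alg:construction} and Lemma~\ref{lem:satG}, and conclude by the correctness of Algorithm~\ref{alg:basic} (Lemma~\ref{lem:coralg1}). You are in fact more explicit than the paper's three-line proof about verifying the preconditions (parity of $|V(H)|$, $d_H(v)=d_G(v)$, non-overfullness), which is a welcome addition rather than a deviation.
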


\begin{proof}
Since the graph $G$ is a $(\sigma=3)$-split graph which has a vertex $v\in S(G)$ adjacent to a $\Delta$-vertex and $\displaystyle{d(v)\leq\frac{|V(G)|-1}{2}}$, by Remark~\ref{remark:subgraph}, there is a  $(\sigma=2)$-split graph $H$ with maximum degree $\Delta$ that contains $v$. Thus, the saturated graph $H^*$ can be constructed following the steps presented in Algorithm~\ref{alg:construction}. Therefore, Algorithm~\ref{alg:basic}, which is correct by Lemma~\ref{lem:coralg1}, can be applied, and a $\Delta$-edge coloring of $G$ is given as an output.
    
\end{proof}

\section{Conclusion}\label{sec:conclusion}
   In this work, we classify as a Class~1 graph any $(\sigma=3)$-split graph $G$ with a vertex $v \in S(G)$ adjacent to a $\Delta$-vertex and $\displaystyle{d(v)\leq\frac{|V(G)|-1}{2}}$. Moreover we provide a polynomial-time algorithm, Algorithm~\ref{alg:basic}, that performs a $\Delta$-edge coloring for these graphs. We think some improvements can be done concerning the algorithm, so that its time complexity can be reduced. We are already working to extend the Main Algorithm (Algorithm~\ref{alg:basic}) to solve the problem for more $(\sigma=3)$-split graphs. These results bring us closer to fully classify split graphs with respect to the {\sc Edge Coloring Problem}.

\bibliographystyle{abbrv}
\bibliography{References}

\end{document}